\theoremstyle{plain}
\newtheorem{theorem}{Theorem}[section]
\newtheorem{lemma}[theorem]{Lemma}
\newtheorem{corollary}[theorem]{Corollary}
\newtheorem{proposition}[theorem]{Proposition}
\newtheorem{fact}[theorem]{Fact}
\newtheorem{definition}[theorem]{Definition}
\theoremstyle{remark}
\newcommand{\old}[1]{}
\newcommand{\rank}{\text{rank}}
\renewcommand{\R}{\ensuremath{\mathbb R}}
\newcommand{\Z}{\ensuremath{\mathbb Z}}
\def\bone{{\bf 1}}
\def\b1{{\bf 1}}
\def\1{{\bf 1}}
\def\R{\mathbb{R}}
\DeclareMathOperator{\supp}{supp}
\newcommand{\norm}[1]{\|#1\|}
\def\Z{\mathbb{Z}}
\newcommand{\declareperson}[1]{\expandafter\newcommand\csname#1\endcsname[1]{\textcolor{orange}{#1: ##1}}}
\DeclareMathOperator{\mix}{mix}
\DeclareMathOperator{\TV}{TV}
\DeclareMathOperator{\Hom}{Hom}
\title{Complete Log Concavity of  Coverage-Like Functions}
\author{Dorna Abdolazimi\thanks{
Research supported in part by NFS grants CCF-1813135.}
}
\author{Shayan Oveis Gharan\thanks{\href{mailto:shayan@cs.washington.edu}{shayan@cs.washington.edu}. Research supported by Air Force Office of Scientific Research grant FA9550-20-1-0212, NSF grants  CCF-1552097, CCF-1907845,  ONR YIP grant N00014-17-1-2429, and a Sloan fellowship.}} 
\affil{University of Washington}
\begin{document}
\maketitle
\begin{abstract}
	We introduce an expressive subclass of non-negative almost submodular set functions, called strongly 2-coverage functions which include coverage and (sums of) matroid rank functions, and  prove  that the homogenization of the generating polynomial  of any such  function is completely log-concave, taking a step towards characterizing the coefficients of (homogeneous) completely log-concave polynomials. As a consequence we obtain that the ''level sets'' of any such function form an ultra-log concave sequence. 
\end{abstract}

\section{Introduction}

A polynomial $p\in \R[z_1,\dots,z_n]$  is  {\em log-concave} over $\R^n_{\geq 0}$, if $p$ is non-negative and  $\log p$ is a concave function on $\R^n_{\geq 0}$. Note that the identically zero polynomials are log-concave. 
We say $p\in\R[z_1,\dots,z_n]$ is {\em completely log-concave/Lortenzian} if for any $k \geq 0$, and any set of vectors $a_1,\dots,a_k\in \R^n_{\geq 0}$, $D_{a_1}\dots D_{a_k}p$ is non-negative and log-concave over $\R^n_{\geq 0}$, where for a vector $a\in \R^n$, $D_{a}=\sum_i a_i \partial_{z_i}$ is the directional derivative operator.  Completely log-concave polynomials were introduced in \cite{AOV18}  and  extended in \cite{ALOV18ii,BH20}.  We say a polynomial $p\in \R[z_1,\dots,z_n]$ is {\em multiaffine} if every monomial of $p$ is square-free, i.e., $\supp(p)\subseteq \{0,1\}^n$. A polynomial $p\in \R[z_1,\dots,z_n]$  is {\em $d$-homogeneous} if $p(\alpha z)=\alpha^d p(z)$ for any $\alpha\in \R$. 
Given a non-negative function $f:2^{[n]}\to\R_{\geq 0}$, we say $f$ is a completely log-concave set function if the generating polynomial of $f$, i.e., $$p_{f}(z_1,\dots,z_n)=\sum_{S \subseteq[n]} f(S) \cdot \prod_{i \in S} z_{i}$$   
is completely log-concave. Note that  $p_f$ is a multiaffine polynomial.  We define $\deg (f)$ to be the degree of polynomial $p_f$. 

 Completely log-concave polynomials have  nice properties that make them useful tools for  design and analysis of algorithms and studying mathematical objects like matroids. One of these properties is that, as first shown in \cite{Gurvits10}, if the polynomial $\sum_{i=0}^{n} c_i y^{n- i} x^i $ is completely log-concave,  then $c_0, \dots, c_n$ is an ultra log-concave sequence. 
 This property is used in \cite{ALOV18} to prove Mason's ultra log-concavity conjecture for independent sets of matroids. Another useful property is that, as first shown in \cite{ALOV18ii} and later improved in  \cite{CGM19, ALOVV21},  given a $d$-homogeneous log-concave set function, a natural random walk can be used to rapidly sample a subset $S$ of the ground set with probability proportional to $f(S)$. Note that any homogeneous log-concave set function, or equivalently any multiaffine homogeneous polynomial, is completely log-concave \cite{ALOV18}.   

An interesting aspect of  homogeneous completely log-concave polynomials  is  that the family of sets that can serve as the support of these polynomials can be nicely characterized. Given a polynomial $p=\sum_{\alpha\in \Z_{\geq 0}^n} c_\alpha z^\alpha$, the Newton polytope of $p$ is defined as
$$ \text{supp}(p) = \{\alpha\in \Z_{\geq 0}^n: c_\alpha\neq 0\},\quad \text{Newt}(p)=\text{conv}(\text{supp}(p)).$$
The following theorem gives a nice characterization of the supports of homogeneous completely log-concave polynomials. 
\begin{theorem}[\cite{ALOV18,BH20}]
Given a homogeneous completely log-concave polynomial $p\in \R_{\geq 0}[z_1,\dots,z_n]$, the Newton polytope of $p$ is a  generalized permutahedron,  namely it is a polytope all of whose edges are parallel to $\bone_i-\bone_j$ for $1\leq i<j\leq j$ and every integer point in this polytope also belongs to the $\text{supp}(p)$. Conversely, for any generalized permutahedran, there is a homogeneous completely log-concave polynomial with support equal to all integer points in the polytope.
\end{theorem}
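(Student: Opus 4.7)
The proof splits into a forward direction (the support of any homogeneous completely log-concave polynomial is the set of lattice points of a generalized permutahedron) and a converse (every generalized permutahedron arises this way). I would channel both directions through Murota's theorem: a subset $M \subseteq \Z_{\geq 0}^n$ of constant coordinate sum equals the lattice points of an integral generalized permutahedron if and only if $M$ is \emph{M-convex}, meaning that for any $\alpha,\beta \in M$ and any $i$ with $\alpha_i>\beta_i$, there exists $j$ with $\alpha_j<\beta_j$ so that both $\alpha-\bone_i+\bone_j$ and $\beta+\bone_i-\bone_j$ lie in $M$.

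For the forward direction, the plan is to deduce M-convexity of $\supp(p)$ from the closure properties of complete log-concavity: stability under directional derivatives $D_{z_k}$, stability under substitutions $z_k \mapsto 0$, and invariance under permutation of coordinates. Given a witness pair $\alpha,\beta \in \supp(p)$ with $\alpha_i>\beta_i$, I would set $\gamma=\min(\alpha,\beta)$ coordinatewise, apply $\prod_k \partial_{z_k}^{\gamma_k}$ to $p$ to obtain a homogeneous completely log-concave polynomial $p'$ whose support contains the disjoint-support exponents $\alpha-\gamma$ and $\beta-\gamma$, and then restrict $p'$ by setting to zero every variable outside $\{i\} \cup \{k : \beta_k > \alpha_k\}$. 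The resulting polynomial still has $\alpha-\gamma$ and $\beta-\gamma$ in its support. A further bivariate slice in the variables $z_i$ and a candidate $z_j$ yields a homogeneous completely log-concave polynomial in two variables, whose coefficient sequence is ultra-log-concave by Gurvits's theorem stated above and hence has contiguous support on the line $a+b = \deg q$. This contiguity produces the intermediate exponent witnessing the exchange, which pulls back through the derivatives and substitutions to produce the desired $j$ for $p$.

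For the converse direction, given an integral generalized permutahedron $P$, I would construct an explicit homogeneous completely log-concave polynomial with support exactly $P \cap \Z^n$. When $P$ is the base polytope of a discrete polymatroid, the multinomial-weighted basis generating polynomial is completely log-concave, which is the central result of \cite{ALOV18} and is extended to all generalized permutahedra in \cite{BH20} through the Lorentzian framework. Since every integral generalized permutahedron coincides up to integer translation with a polymatroid base polytope, this construction covers the general case; translation corresponds to multiplying by a monomial, which preserves complete log-concavity.

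The main obstacle lies in the forward direction, specifically in making the bivariate reduction rigorous. One has to verify carefully that the monomials corresponding to $\alpha$ and $\beta$ genuinely survive the multi-derivative-and-restriction procedure with nonzero coefficients, and that the contiguity of the bivariate support actually delivers a valid exchange index $j$ rather than merely an intermediate monomial unrelated to the specific pair $(\alpha,\beta)$. This is precisely the step that exploits the full strength of complete log-concavity -- closure under \emph{all} directional derivatives, not just individual $\partial_{z_k}$ -- and where ordinary log-concavity alone would be insufficient.
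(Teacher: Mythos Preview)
The paper does not prove this theorem. It is stated in the introduction as a known result, attributed to \cite{ALOV18,BH20}, and is used purely as background motivation; no proof or proof sketch appears anywhere in the paper. Consequently there is nothing in the paper to compare your proposal against.

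For what it is worth, your outline is essentially the argument given in \cite{BH20}: M-convexity of the support is extracted via closure of Lorentzian polynomials under derivatives and specialization, reducing to a two-variable ultra-log-concavity (no-internal-zeros) statement, and the converse is handled by exhibiting an explicit Lorentzian polynomial supported on the lattice points of a given generalized permutahedron. One small correction: in the forward direction you do not actually need the full strength of complete log-concavity (closure under arbitrary \emph{directional} derivatives $D_a$) for this particular argument; closure under the axis-aligned derivatives $\partial_{z_k}$ and under setting variables to zero already suffices, since every operation you perform is of that type. The place where directional derivatives matter is elsewhere in the Lorentzian theory, not in the support characterization.
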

 An immediate consequence of the above theorem is that if $p$ is homogeneous multiaffine and  log-concave then $\text{Newt}(p)$ is the base polytope of a matroid $M$ with ground set of elements $[n]$ (see \cite{GGMS87}). Having the above theorem that characterizes  supports of homogeneous log-concave polynomials, a natural question is whether one can give a more fine characterization of the set of possible coefficients of homogeneous log-concave polynomials. While such characterizations are not known, there are a number of results that give interesting necessary conditions for coefficients of  log-concave polynomials.  One such condition is implied by the following lemma. 
\begin{lemma} [\cite{ALOV18}]\label{lem:almost-log-submodular}
A polynomial $h(y,z ) = a + by+cz+ dyz \in \mathbb{R}[y,z]$ with non-negative coefficients is log-concave if  and only if $2bc \geq ad$. 
\end{lemma}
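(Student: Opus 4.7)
The plan is to apply the standard Hessian criterion for log-concavity: since $h$ is smooth and non-negative, $h$ is log-concave on $\R^2_{\geq 0}$ if and only if the Hessian of $\log h$ is negative semidefinite at every point where $h>0$. So the proof reduces to a direct 2-by-2 Hessian computation, plus checking that the pointwise inequality it yields is uniform over $\R^2_{\geq 0}$.

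First I would compute the partial derivatives of $h(y,z) = a + by + cz + dyz$, obtaining $h_y = b+dz$, $h_z = c+dy$, $h_{yy}=h_{zz}=0$, and $h_{yz}=d$. Since $h_{yy}=h_{zz}=0$, the diagonal entries of $\mathrm{Hess}(\log h)$ are $-h_y^2/h^2$ and $-h_z^2/h^2$, which are automatically non-positive on $\R^2_{\geq 0}$. Thus the only nontrivial condition is that the determinant
\[ \det \mathrm{Hess}(\log h) \;=\; \frac{h_y^2 h_z^2 - (h\cdot h_{yz} - h_y h_z)^2}{h^4} \]
is non-negative. Expanding the mixed term gives the clean identity $h\cdot h_{yz} - h_y h_z = d(a+by+cz+dyz) - (b+dz)(c+dy) = ad - bc$, which is the key algebraic cancellation. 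So the determinant condition becomes
\[ (b+dz)(c+dy) \;\geq\; |ad - bc| \qquad \text{for all } (y,z)\in \R^2_{\geq 0} \text{ with } h(y,z)>0. \]

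Next I would observe that, since $b,c,d\geq 0$, the left-hand side $(b+dz)(c+dy)$ is minimized on $\R^2_{\geq 0}$ at the origin, where it equals $bc$. Hence the above pointwise inequality holds on all of $\R^2_{\geq 0}$ if and only if $bc\geq |ad-bc|$. Splitting into the two cases $ad\geq bc$ (which gives $2bc\geq ad$) and $ad<bc$ (which gives $ad\geq 0$, automatic from non-negativity), this is equivalent to the single inequality $2bc\geq ad$. For necessity, note that when $a>0$ the Hessian at $(0,0)$ already forces this; when $a=0$ the condition is trivially satisfied, which matches the hypothesis.

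There isn't really a hard step here — once one sees the cancellation $h\cdot h_{yz} - h_y h_z = ad-bc$, everything follows from a monotonicity observation. The only thing worth being careful about is the minor bookkeeping for degenerate cases (some coefficients zero, so that $\log h = -\infty$ on part of the boundary); these are handled by continuity, by noting that $h$ is either identically zero (trivially log-concave) or strictly positive on an open dense set of $\R^2_{\geq 0}$ on which the Hessian criterion applies, and extending to the closure.
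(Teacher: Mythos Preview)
Your proof is correct. The paper does not actually prove this lemma---it is stated with a citation to \cite{ALOV18} and used as a black box to derive \cref{cor:logsubmodular}---so there is no in-paper argument to compare against. Your direct Hessian computation, together with the key cancellation $h\cdot h_{yz}-h_y h_z = ad-bc$ and the observation that $(b+dz)(c+dy)$ is minimized at the origin, is exactly the natural elementary proof; the handling of the boundary/degenerate cases is adequate, since for non-negative coefficients the positivity set of $h$ on $\R^2_{\geq 0}$ is always convex and the Hessian criterion on the interior extends by continuity.
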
 
\begin{corollary}[\cite{BH20}]\label{cor:logsubmodular}
If $p_f$ is  log-concave  for a non-negative set  function $f\in 2^{[n]} \rightarrow \mathbb{R}_{\geq 0}$, then $f$ is almost log-submodular, i.e. for  any $S\subset [n]$ and $i, j \in [n] \setminus S$,  $$2 f(S \cup \{i\}) f(S \cup \{i\}) \geq f(S) f(S \cup \{i, j\}).$$ 
\end{corollary}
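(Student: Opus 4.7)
The plan is to reduce $p_f$ to a bivariate polynomial by a parameterized restriction and then invoke \Cref{lem:almost-log-submodular}. Fix $S \subseteq [n]$ and $i, j \in [n] \setminus S$, and for a parameter $t > 0$ consider the specialization of $p_f$ obtained by setting $z_k = t$ for every $k \in S$, $z_k = 0$ for every $k \notin S \cup \{i,j\}$, and writing $y = z_i$, $z = z_j$. The result is a bivariate polynomial
$$h_t(y,z) = a(t) + b(t)\, y + c(t)\, z + d(t)\, yz,$$
whose coefficients are themselves polynomials in $t$:
$$a(t) = \sum_{T \subseteq S} f(T)\, t^{|T|},\quad b(t) = \sum_{T \subseteq S} f(T\cup\{i\})\, t^{|T|},$$
$$c(t) = \sum_{T \subseteq S} f(T\cup\{j\})\, t^{|T|},\quad d(t) = \sum_{T \subseteq S} f(T\cup\{i,j\})\, t^{|T|}.$$

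Since $p_f$ is non-negative and $\log p_f$ is concave on $\R^n_{\geq 0}$, restricting to the two-dimensional affine slice used in the specialization preserves both properties, so $h_t$ is a non-negative log-concave bivariate polynomial for every $t \geq 0$. Applying \Cref{lem:almost-log-submodular} to $h_t$ therefore yields
$$2\, b(t)\, c(t) \;\geq\; a(t)\, d(t) \quad \text{for every } t \geq 0.$$

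Finally, each of $a, b, c, d$ is a polynomial in $t$ of degree at most $|S|$, whose leading coefficient (the $T = S$ term) equals $f(S)$, $f(S\cup\{i\})$, $f(S\cup\{j\})$, and $f(S\cup\{i,j\})$ respectively. Dividing the displayed inequality by $t^{2|S|}$ and letting $t \to \infty$ isolates exactly these leading coefficients, giving the almost log-submodularity bound
$$2\, f(S\cup\{i\})\, f(S\cup\{j\}) \;\geq\; f(S)\, f(S\cup\{i,j\}).$$
There is no real obstacle here: the only substantive choice is the specialization, and the role of the parameter $t$ is to let us single out the coefficient indexed by the full set $S$ among all subsets $T \subseteq S$ in each of $a, b, c, d$, which is what converts the bivariate log-concavity inequality into the sought pointwise inequality at $S$.
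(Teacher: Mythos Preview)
Your argument is correct. The paper proceeds differently: it invokes closure of multiaffine log-concave polynomials under \emph{differentiation} (\Cref{prop:log-concave_closure}\,(\ref{item: differentiation})) and under specialization to~$0$, obtaining directly the bivariate polynomial
\[
q(z_i,z_j)=\Big(\prod_{k\in S}\partial_{z_k}\,p_f\Big)\Big|_{\{z_\ell=0\}_{\ell\notin S\cup\{i,j\}}}
= f(S)+f(S\cup\{i\})z_i+f(S\cup\{j\})z_j+f(S\cup\{i,j\})z_iz_j,
\]
to which \Cref{lem:almost-log-submodular} applies immediately. Your route avoids differentiation entirely: you only use that specialization to nonnegative constants preserves log-concavity, and you replace the derivative step by the parametric substitution $z_k=t$ for $k\in S$ followed by the limit $t\to\infty$. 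This is more elementary in the sense that it does not appeal to the nontrivial fact that $\partial_{z_i}$ preserves log-concavity of multiaffine polynomials---but note that the paper's proof of that very fact (\Cref{prop:log-concave_closure}\,(\ref{item: differentiation})) is itself a scaling-and-limit argument of the same flavor as yours. One small wording point: calling $f(S)$ the ``leading coefficient'' of $a(t)$ is inaccurate when $f(S)=0$, but your actual limit computation $a(t)/t^{|S|}\to f(S)$ is valid regardless, so the conclusion stands.
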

\begin{proof}
To see this, note that the set of multiaffine log concave polynomials is closed under differentiation and specialization (see \cref{prop:log-concave_closure}). In particular, the following polynomial is log-concave: 
\begin{align*}
q (z_i, z_j) & = \left( (\prod_{k \in S} \partial_{z_k}) p_f (z_1, \dots, z_n)\right) _{| \{z_\ell =0\}_{ \ell \in [n] \setminus (S \cup \{i,j\})}} \\&= f(S) + f(S \cup \{i\}) z_i +  f(S \cup \{j\}) z_j +  f(S \cup \{i, j\}) z_i z_j.
\end{align*}
Thus, by  \cref{lem:almost-log-submodular}, $f$ is almost submodular. 
\end{proof}
Given a polynomial $p \in \R[z_1,\dots,z_n]$, we define the homogenization of $p$ as $\Hom(p, y) \coloneqq \sum_{i=0}^{n} y^{n+1- i} p_i $, where $p_i$ is the $i$-homogeneous part of $p$, i.e.  $p = p_0 + \dots + p_d$ and  $p_i$ is a $i$-homogeneous polynomial. Then, \cref{cor:logsubmodular} implies that  given a non-negative set  function $f\in 2^{[n]} \rightarrow \mathbb{R}_{\geq 0}$, if $\Hom (p_f, y)$ is completely log-concave, then $f$ is almost log-submodular. Thus, to take a step toward finding a classification of coefficients of homogeneous completely log-concave polynomials, a natural question to ask is whether one can find a large subclass of non-negative almost log-submodular functions such that for every $f$ in that subclass, $\Hom (p_f, y)$  is completely log-concave. Note that if $\Hom (p_f, y)$ is completely log-concave, then all homogeneous parts of $p_f$ are also log-concave.
 An important subclass of non-negative log-submodular functions with numerous applications is the class of non-negative monotone submodular functions (see \cref{fact:submodular-to-logsub}). Note that, however,   even within  this subclass,  one can find functions that are not  log-concave, and thus their homogenizations are not log-concave either (see \cref{cor:main-neg} for an example). 



\paragraph{Our Contribution.} 

In this paper, we introduce an expressive subclass of  non-negative monotone   functions, called strongly 2-coverage functions, and  prove  that $\Hom (p_f, y)$ is completely log-concave. We show that the set of strongly $2$-coverage functions  includes several  fundamental classes of non-negative monotone submodular functions including  matroid rank functions,  coverage functions, and, more generally, matroid rank sum functions, which are positive linear combination of rank functions and include a large subset of submodular functions that have been studied in the mechanism design literature \cite{CCPV7, DRY11, D11, DV11}.  As a consequence we prove the following theorem. 
\begin{theorem} If $f:2^{[n]}\to\R_{\geq 0}$ is a coverage function or a sum of matroid rank functions, then the sequence $f_0, f_1,\dots, f_n$ is ultra log-concave where $f_i=\sum_{S:|S|=i} f(S)$.
\end{theorem}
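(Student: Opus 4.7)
The plan is to derive this theorem as a short consequence of the paper's main result, which asserts that $\Hom(p_f,y)$ is completely log-concave whenever $f$ is a strongly $2$-coverage function; by the paper's classification, coverage functions and sums of matroid rank functions qualify, so this hypothesis applies under the given assumption on $f$.

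Starting from the homogeneous completely log-concave polynomial $\Hom(p_f,y)$ in the variables $(y,z_1,\dots,z_n)$, I would collapse all of the $z$-variables by substituting $z_1 = z_2 = \cdots = z_n = x$. This is a non-negative linear change of variables and so preserves complete log-concavity (the same closure property is already invoked in the proof of \cref{cor:logsubmodular} via \cref{prop:log-concave_closure}). Using that the $i$-homogeneous part of $p_f$ evaluated at $(x,\dots,x)$ equals $f_i\, x^i$, the resulting bivariate polynomial is
\[
q(y,x) \;=\; \sum_{i} f_i\, y^{D-i}\, x^i,
\]
where $D$ is the homogeneous degree of $\Hom(p_f,y)$. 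The last step applies the theorem of Gurvits recalled in the introduction: the coefficient sequence of any bivariate homogeneous completely log-concave polynomial is ultra log-concave. Applied to $q(y,x)$, this yields the claimed ultra log-concavity of the sequence $f_0,f_1,\dots,f_n$.

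The deductive weight sits almost entirely in the cited main theorem that $\Hom(p_f,y)$ is completely log-concave for strongly $2$-coverage $f$; the corollary above is a mechanical extraction via a single substitution and Gurvits. The only delicate bookkeeping is matching the binomial normalization supplied by Gurvits (which is dictated by the homogeneous degree $D$ of $\Hom(p_f,y)$) with the normalization intended by the word ``ultra log-concave'' for a sequence indexed by subsets of $[n]$; this reduces to tracking the exact degree in $\Hom(p_f,y)$ and, if needed, inserting zero padding at the top of the coefficient sequence so that Gurvits's conclusion is read against the right binomials.
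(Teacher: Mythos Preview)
Your proposal is correct and is exactly the approach the paper takes: it deduces the theorem from \cref{thm:inhomoglogconcavity} (which shows $\Hom(p_f,y)$ is completely log-concave for strongly $2$-coverage $f$, a class containing coverage and matroid-rank-sum functions by \cref{prop:2coverage_functions}, \cref{prop:convex_cone}, and \cref{fact:coverage_rank}), then specializes all $x_i$ to a single variable and applies \cref{prop:ULC}. Your bookkeeping concern is moot, since $\Hom(p_f,y)$ is $(n{+}1)$-homogeneous and the resulting bivariate polynomial is precisely of the form in \cref{prop:ULC}, so the $\binom{n+1}{k}$ normalization there matches the paper's definition of ultra log-concavity without any padding.
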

Moreover, we introduce a strictly larger class of non-negative submodular functions called 2-coverage functions, which, for instance, also include the indicator function of independent sets of any matroid. We  prove that if $p_f$ is 2-coverage,  all homogeneous parts of $p_f$  are  log-concave. As a consequence, given a 2-coverage function, one can use the results of  \cite{ALOV18ii, ALOVT21, CGM19}  to rapidly sample a subset $S$ of $[n]$ with probability proportional to $f(S)$.

\begin{figure}[!h]
\centering

\tikzset{every picture/.style={line width=0.75pt}} 

\begin{tikzpicture}[x=0.75pt,y=0.75pt,yscale=-1,xscale=1]

\draw   (275,210) -- (425,210) -- (425,260) -- (275,260) -- cycle ;
\draw   (275,120) -- (425,120) -- (425,170) -- (275,170) -- cycle ;
\draw   (465,120) -- (615,120) -- (615,170) -- (465,170) -- cycle ;
\draw   (85,90) -- (235,90) -- (235,140) -- (85,140) -- cycle ;
\draw    (350,172) -- (350,204.5) ;
\draw [shift={(350,206.5)}, rotate = 270] [color={rgb, 255:red, 0; green, 0; blue, 0 }  ][line width=0.75]    (10.93,-3.29) .. controls (6.95,-1.4) and (3.31,-0.3) .. (0,0) .. controls (3.31,0.3) and (6.95,1.4) .. (10.93,3.29)   ;
\draw   (85,150) -- (235,150) -- (235,200) -- (85,200) -- cycle ;
\draw    (237,115) -- (268.32,135.41) ;
\draw [shift={(270,136.5)}, rotate = 213.08] [color={rgb, 255:red, 0; green, 0; blue, 0 }  ][line width=0.75]    (10.93,-3.29) .. controls (6.95,-1.4) and (3.31,-0.3) .. (0,0) .. controls (3.31,0.3) and (6.95,1.4) .. (10.93,3.29)   ;
\draw   (85,210) -- (235,210) -- (235,260) -- (85,260) -- cycle ;
\draw    (237,235) -- (270,235.47) ;
\draw [shift={(272,235.5)}, rotate = 180.82] [color={rgb, 255:red, 0; green, 0; blue, 0 }  ][line width=0.75]    (10.93,-3.29) .. controls (6.95,-1.4) and (3.31,-0.3) .. (0,0) .. controls (3.31,0.3) and (6.95,1.4) .. (10.93,3.29)   ;
\draw    (237,175) -- (268.32,154.59) ;
\draw [shift={(270,153.5)}, rotate = 146.92] [color={rgb, 255:red, 0; green, 0; blue, 0 }  ][line width=0.75]    (10.93,-3.29) .. controls (6.95,-1.4) and (3.31,-0.3) .. (0,0) .. controls (3.31,0.3) and (6.95,1.4) .. (10.93,3.29)   ;
\draw   (275,300) -- (425,300) -- (425,350) -- (275,350) -- cycle ;
\draw    (350,262) -- (350,293.5) ;
\draw [shift={(350,295.5)}, rotate = 270] [color={rgb, 255:red, 0; green, 0; blue, 0 }  ][line width=0.75]    (10.93,-3.29) .. controls (6.95,-1.4) and (3.31,-0.3) .. (0,0) .. controls (3.31,0.3) and (6.95,1.4) .. (10.93,3.29)   ;
\draw    (540,172) -- (540,204.5) ;
\draw [shift={(540,206.5)}, rotate = 270] [color={rgb, 255:red, 0; green, 0; blue, 0 }  ][line width=0.75]    (10.93,-3.29) .. controls (6.95,-1.4) and (3.31,-0.3) .. (0,0) .. controls (3.31,0.3) and (6.95,1.4) .. (10.93,3.29)   ;
\draw   (465,210) -- (615,210) -- (615,260) -- (465,260) -- cycle ;
\draw   (275,390) -- (425,390) -- (425,440) -- (275,440) -- cycle ;
\draw    (350,352) -- (350,383.5) ;
\draw [shift={(350,385.5)}, rotate = 270] [color={rgb, 255:red, 0; green, 0; blue, 0 }  ][line width=0.75]    (10.93,-3.29) .. controls (6.95,-1.4) and (3.31,-0.3) .. (0,0) .. controls (3.31,0.3) and (6.95,1.4) .. (10.93,3.29)   ;
\draw    (427,145) -- (460,145.47) ;
\draw [shift={(462,145.5)}, rotate = 180.82] [color={rgb, 255:red, 0; green, 0; blue, 0 }  ][line width=0.75]    (10.93,-3.29) .. controls (6.95,-1.4) and (3.31,-0.3) .. (0,0) .. controls (3.31,0.3) and (6.95,1.4) .. (10.93,3.29)   ;

\draw (350,415) node   [align=left] {\begin{minipage}[lt]{102pt}\setlength\topsep{0pt}
\begin{center}
efficient sampling
\end{center}

\end{minipage}};
\draw (160,235) node   [align=left] {\begin{minipage}[lt]{115.6pt}\setlength\topsep{0pt}
\begin{center}
independent set \\functions of matroids
\end{center}

\end{minipage}};
\draw (160,175) node   [align=left] {\begin{minipage}[lt]{115.6pt}\setlength\topsep{0pt}
\begin{center}
matroid \\rank functions
\end{center}

\end{minipage}};
\draw (160,115) node   [align=left] {\begin{minipage}[lt]{115.6pt}\setlength\topsep{0pt}
\begin{center}
coverage functions
\end{center}

\end{minipage}};
\draw (350,145) node   [align=left] {\begin{minipage}[lt]{115.6pt}\setlength\topsep{0pt}
\begin{center}
strongly 2-coverage
\end{center}

\end{minipage}};
\draw (540,145) node   [align=left] {\begin{minipage}[lt]{115.6pt}\setlength\topsep{0pt}
\begin{center}
CLC\\homogenization
\end{center}

\end{minipage}};
\draw (350,325) node   [align=left] {\begin{minipage}[lt]{115.6pt}\setlength\topsep{0pt}
\begin{center}
log-concave \\homogeneous parts 
\end{center}

\end{minipage}};
\draw (540,235) node   [align=left] {\begin{minipage}[lt]{115.6pt}\setlength\topsep{0pt}
\begin{center}
ultra log-concave\\sequence
\end{center}

\end{minipage}};
\draw (350,235) node   [align=left] {\begin{minipage}[lt]{115.6pt}\setlength\topsep{0pt}
\begin{center}
 2-coverage
\end{center}

\end{minipage}};

\end{tikzpicture}
  \caption{Summary of results}\label{mylabel}

\end{figure}
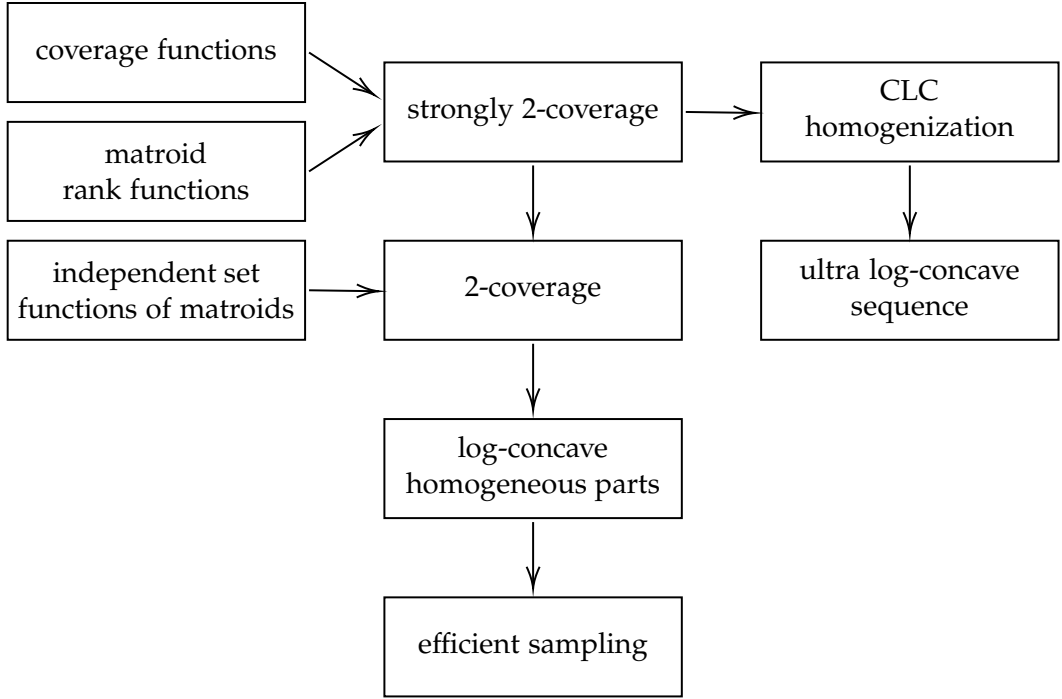






\section{Preliminaries}
Throughout the paper, we assume that for any set function  $f^{[n]} \rightarrow \mathbb{R}_{\geq 0}$   that we work with, we have $f(\emptyset) = 0$.  
We write  $[n]$ to denote $\{1,\dots, n\}$. 
For a function $f:2^{[n]}\to\R$ and an integer $0\leq d\leq n$ define the $d$-homogeneous restriction of $f$, $f^{(d)}:2^{[n]}\to\R$ as follows:
$$f^{(d)}(S)=\begin{cases} f(S) &\text{if }|S|=d, \\ 0 &\text{otherwise.}\end{cases}.$$
\begin{fact} \label{fact:crossterms}
For any $x, y \in \mathbb{R}$, $-2xy \leq c x^2 + \frac{1}{c} y^2$. 
\end{fact}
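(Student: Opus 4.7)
The plan is to reduce this to the non-negativity of a square. First I would note that for the statement to be meaningful we need $c>0$ (which is surely what is intended, given the later use of the fact to bound cross terms; if $c<0$ the right-hand side can clearly be driven to $-\infty$). Under that assumption, the claim $-2xy \leq cx^2 + \tfrac{1}{c}y^2$ is equivalent to
\[
c x^2 + 2xy + \tfrac{1}{c} y^2 \;\geq\; 0.
\]

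The key step is to recognize the left-hand side as a perfect square. Explicitly,
\[
c x^2 + 2xy + \tfrac{1}{c} y^2 \;=\; \left(\sqrt{c}\,x + \tfrac{1}{\sqrt{c}}\,y\right)^{2} \;\geq\; 0,
\]
which gives the desired inequality. An equivalent route is weighted AM-GM: $cx^2 + \tfrac{1}{c}y^2 \geq 2\sqrt{cx^2 \cdot \tfrac{1}{c}y^2} = 2|xy| \geq -2xy$, but this is just the completion of the square in disguise.

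There is no real obstacle; the only point of any care is making the hypothesis $c>0$ explicit (or otherwise restricting $c$) so that $\sqrt{c}$ and $1/c$ are well-defined and the bound is non-vacuous. I would write the proof as two lines: state $c>0$, then display the identity $cx^2 + 2xy + \tfrac{1}{c}y^2 = (\sqrt{c}\,x + y/\sqrt{c})^2 \geq 0$ and rearrange.
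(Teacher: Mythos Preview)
Your argument is correct; the paper states this fact without proof, so there is nothing to compare against beyond noting that the paper implicitly assumes $c>0$ as you do.
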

\subsection{Linear Algebra}

We write $J_n$ to denote the $n \times n$ all-ones matrix.  For any integer  $ n>0$ and  indices $i,j$,  define $E_{n, ij} \in \mathbb{R}^{n \times n}$ as  $E_{n, ij} (i, j)= 1$  and  let  every other entry to be zero. We drop the subscript  $n$ when the dimension is clear from context.
\begin{lemma}\label{lem:dj}
For any  diagonal matrix $D \succeq 0$, $JD+DJ$ has at most one positive eigenvalue.
\end{lemma}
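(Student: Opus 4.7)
The plan is to exploit the very low rank of $M := JD + DJ$. Writing $d = (d_1, \dots, d_n)^T$ for the diagonal of $D$, direct computation gives $(JD)_{ij} = d_j$ and $(DJ)_{ij} = d_i$, so $M_{ij} = d_i + d_j$; equivalently, $M = d\bone^T + \bone d^T$. In particular $\rank(M) \leq 2$, so $M$ has at most two nonzero eigenvalues $\lambda_1, \lambda_2$ and the remaining $n-2$ eigenvalues are zero. The task therefore reduces to showing that $\lambda_1 \lambda_2 \leq 0$.

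To compute $\lambda_1 \lambda_2$ I would use the identities $\lambda_1 + \lambda_2 = \mathrm{tr}(M) = 2 \bone^T d$ and $\lambda_1^2 + \lambda_2^2 = \mathrm{tr}(M^2)$. Squaring $M$ and taking the trace (equivalently, expanding $\sum_{i,j}(d_i + d_j)^2$) yields $\mathrm{tr}(M^2) = 2n\norm{d}^2 + 2(\bone^T d)^2$, so
\[
2\lambda_1 \lambda_2 \;=\; (\lambda_1 + \lambda_2)^2 - (\lambda_1^2 + \lambda_2^2) \;=\; 2(\bone^T d)^2 - 2n\norm{d}^2.
\]
The key inequality is then Cauchy--Schwarz applied to $\bone$ and $d$: $(\bone^T d)^2 \leq n \norm{d}^2$. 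This forces $\lambda_1 \lambda_2 \leq 0$, and hence at most one of $\lambda_1, \lambda_2$ is positive, as required.

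There is no serious obstacle here; the entire argument reduces to a few lines of linear algebra once one spots the rank-$2$ factorization $M = d\bone^T + \bone d^T$. The only point to watch is that the trace identities implicitly use the fact that the $n-2$ zero eigenvalues of $M$ contribute nothing to either $\mathrm{tr}(M)$ or $\mathrm{tr}(M^2)$, which is automatic. Note that the hypothesis $D \succeq 0$ is barely used — Cauchy--Schwarz applies to any real $d$ — so the same statement in fact holds for any real diagonal $D$; if desired, an equally clean alternative is to write $M$ in its invariant subspace $\mathrm{span}\{\bone, d\}$ and observe that the resulting $2 \times 2$ matrix has determinant $(\bone^T d)^2 - n\norm{d}^2 \leq 0$, handling the degenerate case $d \propto \bone$ (where $M$ is a non-negative multiple of $J$) separately.
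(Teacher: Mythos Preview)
Your proof is correct but takes a genuinely different route from the paper. The paper instead writes
\[
JD + DJ = (D+I)J(D+I) - DJD - J,
\]
observes that each of $(D+I)J(D+I)$, $DJD$, and $J$ is a rank-one PSD matrix, and concludes: a rank-one PSD matrix has at most one positive eigenvalue, and subtracting PSD matrices cannot increase the count of positive eigenvalues. Your argument instead exploits the rank-two factorization $M = d\bone^T + \bone d^T$ directly, computing the product of the two possibly nonzero eigenvalues via $\mathrm{tr}(M)$ and $\mathrm{tr}(M^2)$ and bounding it by Cauchy--Schwarz. The paper's decomposition fits the ``rank-one PSD minus PSD'' template that recurs in the log-concavity literature and meshes cleanly with how the lemma is later combined with other PSD corrections; your approach is more self-contained, requires no algebraic identity to be spotted, and makes explicit (as you note) that the hypothesis $D \succeq 0$ is not actually needed.
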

\begin{proof}
We can write
\[JD+DJ = (D+I )J(D+I)  - DJD - J.\]
Now, the statement follows from the fact that  $(D+I )J(D+I)$, $J$, and $DJD$ are all rank-one PSD matrices.
\end{proof}

\begin{lemma}[\cite{ALOV19}] \label{lem:normalize_1positive}
Let $A \in \mathbb{R}^{n \times n}$ be a symmetric matrix. For any $P \in \mathbb{R}^{m \times n}$, if $A$ has at most one positive eigenvalue, $PAP^\top$ has at most one positive eigenvalue. 
\end{lemma}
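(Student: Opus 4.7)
The plan is to use the Courant--Fischer-style variational characterization of the number of positive eigenvalues: for a real symmetric matrix $M$, the number $n_+(M)$ of positive eigenvalues equals the maximum dimension of a subspace $V$ on which the quadratic form $x \mapsto x^\top M x$ is strictly positive on $V \setminus \{0\}$. This characterization is exactly what lets one transport ``at most one positive eigenvalue'' through a congruence-like transformation $A \mapsto P A P^\top$ even when $P$ is rectangular.

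First I would assume for contradiction that $P A P^\top$ has at least two positive eigenvalues. By the characterization above, this gives a $2$-dimensional subspace $V \subseteq \R^m$ such that $y^\top (P A P^\top)\, y > 0$ for every nonzero $y \in V$. The key step is then to push this subspace back to $\R^n$ via $P^\top$ without losing dimension.

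Next I would observe that the linear map $y \mapsto P^\top y$ is injective on $V$: if $P^\top y = 0$ for some $y \in V$, then $y^\top (P A P^\top)\, y = (P^\top y)^\top A (P^\top y) = 0$, contradicting strict positivity of the quadratic form on $V \setminus \{0\}$. Consequently $W := P^\top V$ is a $2$-dimensional subspace of $\R^n$, and for every nonzero $x = P^\top y \in W$ we have
\[ x^\top A x = y^\top P A P^\top y > 0. \]
Applying the variational characterization again, this forces $A$ to have at least two positive eigenvalues, contradicting the hypothesis.

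There is no serious obstacle here beyond invoking the right variational principle; the only mildly delicate point is ensuring $P^\top|_V$ is injective so that the candidate subspace in $\R^n$ really has dimension two, and the strictness of $y^\top P A P^\top y > 0$ on $V \setminus \{0\}$ supplies exactly that. An alternative route would be to write $A = \lambda u u^\top - C$ with $C \succeq 0$ via the spectral theorem (using that at most one eigenvalue is positive), note that $P A P^\top = \lambda (P u)(P u)^\top - P C P^\top$ is a rank-one PSD perturbation of a negative semidefinite matrix, and conclude from Weyl's inequalities; but the subspace argument above is shorter and coordinate-free.
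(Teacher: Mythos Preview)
Your argument is correct. The variational characterization of $n_+(M)$ you invoke is standard, the injectivity of $P^\top|_V$ follows exactly as you say from strict positivity of the quadratic form on $V\setminus\{0\}$, and the contradiction is clean. The alternative spectral-decomposition route you sketch is also valid.

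There is nothing to compare against in the paper: the lemma is simply quoted from \cite{ALOV19} without proof. So your write-up is not a different approach so much as a supplied proof where the paper gives none. If anything, the spectral-decomposition variant you mention in passing is closer in spirit to how the surrounding arguments in the paper work (e.g., the proof of \cref{lem:dj} decomposes a matrix as a rank-one PSD piece minus PSD pieces), but your subspace argument is equally good and arguably more transparent.
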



\subsection{ Log Concave Polynomials} \label{sec:com_log_concave}

\begin{proposition} [\cite{BH20}; \cite{ALOV18}] \label{prop:log-concave_closure}
Given a polynomial $p \in \mathbb{R}[z_1, \dots, z_n]$
\begin{enumerate}[i]
\item If $p$ is (completely) log-concave, then $p_{|z_i =a}$ is also (completely) log-concave for  any $a \in \mathbb{R}_{\geq 0}$ and $1 \leq i \leq n$.  \label{item: specialization}
\item If $p$ is (completely) log-concave, any $c, \lambda_{1}, \ldots, \lambda_{n} \in \mathbb{R}_{\geq 0}$, $ c p(\lambda_1 z_1, \dots, \lambda_n z_n)$ is also (completely) log-concave. \label{item: scaling}
\item If $p$ is completely log-concave, then  $\partial_{z_i} p$  is also completely log-concave for any $1 \leq i \leq n$. Moreover, if $p$ is multiaffine and log-concave,  then  $\partial_{z_i} p$  is also  log-concave for any $1 \leq i \leq n$. \label{item: differentiation}
\end{enumerate}
\end{proposition}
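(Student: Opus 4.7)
The plan is to handle the three parts in order, exploiting the fact that specialization and scaling are concretely realized linear substitutions, and then reducing differentiation to a limit of such operations.

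For part (i), log-concavity of $p|_{z_i=a}$ on $\R^{n-1}_{\geq 0}$ follows because the restriction of a concave function to an affine subspace is concave; non-negativity is immediate. The completely-log-concave strengthening uses the observation that for any $a_1,\dots,a_k \in \R^{n-1}_{\geq 0}$ (viewed as vectors in $\R^n_{\geq 0}$ by zero-padding in coordinate $i$), the operators $D_{a_j}$ commute with the specialization, so $D_{a_1}\cdots D_{a_k}(p|_{z_i=a}) = (D_{a_1}\cdots D_{a_k}p)|_{z_i=a}$, which reduces to the log-concave case applied to the completely-log-concave polynomial $D_{a_1}\cdots D_{a_k}p$. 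For part (ii), $z \mapsto (\lambda_1 z_1, \dots, \lambda_n z_n)$ is linear and sends $\R^n_{\geq 0}$ into itself, so composing $\log p$ with it preserves concavity, and multiplying by $c \geq 0$ only shifts $\log p$. For complete log-concavity, the chain rule gives $D_a[p(\lambda \cdot z)] = (D_{\lambda \cdot a}p)(\lambda \cdot z)$ with $\lambda \cdot a := (\lambda_1 a_1,\dots,\lambda_n a_n) \in \R^n_{\geq 0}$, and iterating reduces to the log-concave case applied to derivatives of $p$.

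For part (iii), the completely-log-concave assertion is essentially the definition: since $\partial_{z_i}p = D_{e_i}p$, for any further directions $a_2,\dots,a_k$ the iterated derivative $D_{a_2}\cdots D_{a_k}(\partial_{z_i}p) = D_{a_2}\cdots D_{a_k}D_{e_i}p$ is non-negative and log-concave by complete log-concavity of $p$. The moreover clause about multiaffine log-concave polynomials is the substantive part, and the trick I would use is to exhibit $\partial_{z_i}p$ as a limit of polynomials produced by parts (i) and (ii). Since $p$ is multiaffine we have $p = p|_{z_i=0} + z_i\,\partial_{z_i}p$, which yields the identity
\[
\partial_{z_i}p \;=\; \lim_{a\to\infty}\frac{1}{a}\, p|_{z_i=a}.
\]
By parts (i) and (ii), each $\frac{1}{a}\,p|_{z_i=a}$ is a non-negative log-concave polynomial on $\R^n_{\geq 0}$, and log-concavity is preserved under pointwise limits: passing to the limit in the defining inequality $q(\lambda x+(1-\lambda)y) \geq q(x)^\lambda q(y)^{1-\lambda}$ preserves it using continuity of $(s,t)\mapsto s^\lambda t^{1-\lambda}$ on $\R_{\geq 0}^2$.

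The main subtlety to watch for is precisely this closure of log-concavity under pointwise limits at boundary points where $\partial_{z_i}p$ vanishes, but since polynomials are continuous and the limiting inequality extends by continuity to all of $\R^n_{\geq 0}$, this is routine. An alternative to the limit argument would be to verify log-concavity of $\partial_{z_i}p$ directly via the $2{\times}2$ Hessian-minor characterization of multiaffine log-concavity, but the limit approach is cleaner and avoids case analysis.
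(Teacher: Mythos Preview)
Your proposal is correct and follows essentially the same approach as the paper: parts (i) and (ii) are routine, the first claim in (iii) is immediate from the definition, and for the multiaffine clause both you and the paper use the decomposition $p = z_i\,\partial_{z_i}p + p|_{z_i=0}$ to exhibit $\partial_{z_i}p$ as a pointwise limit of log-concave polynomials. The only cosmetic difference is that the paper takes $t^{-1}p(z_1,\dots,tz_i,\dots,z_n)$ as $t\to\infty$ while you take $a^{-1}p|_{z_i=a}$ as $a\to\infty$.
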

\begin{proof}
\cref{item: specialization} and \cref{item: scaling}  follow from  definition. The first part of \cref{item: differentiation} also follows from the definition of log-concavity. We prove the second part without loss of generality for $\partial_{z_1} p$. Let $q_t (z_1, \dots, z_n) = t^ {-1} ( t z_1, \dots, z_n)$.  By \cref{item: scaling}, $q_t$ is log-concave for any $t \geq 0$. Note that since $p$ is multiaffine, we can write $ p = z_1 g+ h$, where $g, h \in \mathbb{R}[z_2, \dots, z_n]$, and hence $\partial_{z_1} p = g$. Assume that $g$ is not log-concave.   Therefore, for some $0 \leq \lambda \leq 1$ and $a, b \in \mathbb{R}_{\geq 0}$,  $g ( \lambda a +  (1- \lambda) b) >    g( a)^\lambda +  g(b)^{1- \lambda} $.  Note that $q_t  =  z_1 g+ t^{-1} h$. So for sufficiently large $t \in \mathbb{R}_{\geq 0}$,  $q_t ( \lambda a +  (1- \lambda) b) >   \lambda q_t( a) +  (1- \lambda) q_t(b) $, which is a contradiction with the fact that $q_t$ is log-concave. 
\end{proof}

For $d$-homogeneous polynomials, the following lemma gives an equivalent condition to log-concavity that in many cases is much easier to verify. 
\begin{lemma} [\cite{AOV18}]\label{log-concave-lemma}
Let $p \in \mathbb{R}_{\geq 0}\left[x_{1}, \ldots, x_{n}\right]$ be a $d$-homogeneous polynomial for some integer $d \geq 0$. For $a \in \mathbb{R}^n_{\geq 0}$ such that $p(a) \neq 0$, $p$ is log-concave at $a$ if and only if, $\nabla^2 p(a)$ has at most one positive eigenvalue. 
\end{lemma}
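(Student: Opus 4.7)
The plan is to reduce log-concavity at $a$ to a semidefinite condition on the Hessian, and then exploit Euler's identities for homogeneous polynomials to translate between this condition and the spectrum of $\nabla^2 p(a)$.

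First I would write out $\nabla^2 \log p(a)$ explicitly: with $c:=p(a)$, $b:=\nabla p(a)$, $H:=\nabla^2 p(a)$, one has
\[
\nabla^2 \log p(a) \;=\; \frac{1}{c^2}\bigl(c H - b b^\top\bigr).
\]
So $p$ is log-concave at $a$ if and only if $c H \preceq b b^\top$. Next I would invoke the two Euler identities for a $d$-homogeneous $p$: $\langle b, a\rangle = d\,c$ and $H a = (d-1)\,b$. In particular $a^\top H a = d(d-1)\,c$, which is strictly positive for $d \geq 2$ (the cases $d\in\{0,1\}$ are trivial: $H=0$ and $\log p$ is automatically concave where defined).

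For the ($\Rightarrow$) direction, assume $cH \preceq bb^\top$. Since $bb^\top$ is rank-one PSD, Weyl's inequality gives $\lambda_i(cH) \leq \lambda_i(bb^\top)$ for every $i$; as $\lambda_i(bb^\top)=0$ for $i\geq 2$, the matrix $cH$ has at most one positive eigenvalue, and hence so does $H$ (using $c>0$). For the ($\Leftarrow$) direction, suppose $H$ has at most one positive eigenvalue. Fix any $w$ and consider the $2\times 2$ principal submatrix of $H$ restricted to $\mathrm{span}(a,w)$; by \cref{lem:normalize_1positive} it also has at most one positive eigenvalue, so its determinant is nonpositive:
\[
(a^\top H a)(w^\top H w) \;\leq\; (a^\top H w)^2.
\]
Substituting $a^\top H a = d(d-1)c$ and $a^\top H w = (d-1)\,b^\top w$ (from $Ha=(d-1)b$) and rearranging yields
\[
w^\top H w \;\leq\; \frac{d-1}{d}\cdot\frac{(b^\top w)^2}{c} \;\leq\; \frac{(b^\top w)^2}{c},
\]
which is exactly the condition $cH \preceq bb^\top$, i.e., $\nabla^2 \log p(a) \preceq 0$.

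The main conceptual step, and the place where the structural assumption is really used, is the ($\Leftarrow$) direction: the one-positive-eigenvalue hypothesis must be converted into a ``reverse Cauchy--Schwarz'' inequality, which is precisely what the $2$-dimensional restriction $\mathrm{span}(a,w)$ supplies via the nonpositive determinant. Everything else is bookkeeping with Euler's identities and a standard Weyl interlacing argument for the forward direction.
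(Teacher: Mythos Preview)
The paper does not prove this lemma; it is quoted from \cite{AOV18} without proof. Your argument is correct and is essentially the standard one: write $\nabla^2\log p(a)=\tfrac{1}{c^2}(cH-bb^\top)$, use Euler's identities $Ha=(d-1)b$ and $a^\top Ha=d(d-1)c$, get the forward direction from $cH\preceq bb^\top$ plus the fact that $bb^\top$ has rank one, and get the backward direction by compressing $H$ to the $2\times2$ matrix $PHP^\top$ with $P=\begin{pmatrix}a^\top\\ w^\top\end{pmatrix}$ and invoking \cref{lem:normalize_1positive}.

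Two small remarks on the write-up. First, what you call ``the $2\times2$ principal submatrix of $H$ restricted to $\mathrm{span}(a,w)$'' is not a principal submatrix; it is the compression $PHP^\top$. The appeal to \cref{lem:normalize_1positive} is still valid since that lemma allows arbitrary $P$. Second, the step ``at most one positive eigenvalue, so its determinant is nonpositive'' needs the extra observation that the $(1,1)$ entry $a^\top Ha=d(d-1)c$ is strictly positive for $d\geq 2$; this forces the $2\times2$ matrix to have exactly one positive and one nonpositive eigenvalue, whence $\det\leq 0$. Without noting this, a negative semidefinite $2\times2$ matrix would satisfy ``at most one positive eigenvalue'' yet could have positive determinant.
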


A polynomial $p\in \R[z_1,\dots,z_n]$ is {\em decomposable} if it can be written as a sum of two nonzero polynomials $f$ and $g$ such that $f$ and $g$ are supported on disjoint sets of variables. We say $p$ is {\em indecomposable} otherwise.  For a vector $\alpha \in \mathbb{Z}_{\geq 0}^n$ and  a   polynomial $p\in\R[z_1,\dots,z_n]$, define $ \partial^\alpha p \in \R[\{z_i\}_{i\notin\tau}]$  as
$$ \partial^\alpha p:=(\prod_{i=1}^n \partial^{\alpha_i}_{z_i}) p.$$ 
When $p\in\R[z_1,\dots,z_n]$ is a multiaffine polynomial, we might represent partial derivatives by subsets $\tau \subseteq [n]$, i.e. $\partial^\tau p:=(\prod_{i\in \tau} \partial_{z_i}) p$. In this case, we sometimes write $p_\tau$ to denote $\partial^\tau p$.
 The following theorem by \cite{ALOV18}, provides a very useful tool for proving a $d$-homogeneous polynomial is completely log-concave.

\begin{theorem}[\cite{ALOV18}] \label{log-concave-thm}
Let $p \in \mathbb{R}_{\geq 0}\left[x_{1}, \ldots, x_{n}\right]$ be a $d$-homogeneous polynomial. Then, $p$ is completely log-concave  if
\begin{enumerate}[i)]\item For any $\alpha \in \mathbb{Z}_{\geq 0}^n$, of size $\|\alpha \|_1\leq d-2$, $ \partial^\alpha p $ is indecomposable.
\item For any $\alpha\in \mathbb{Z}_{\geq 0}^n$, of size $\|\alpha \|_1 =  d-2$, the quadratic polynomial $ \partial^\alpha p $ is log-concave.
\end{enumerate}
\end{theorem}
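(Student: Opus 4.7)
My plan is induction on the degree $d$, combining \cref{log-concave-lemma} (the Hessian signature characterization of log-concavity for homogeneous polynomials) with \cref{prop:log-concave_closure} (closure of CLC under differentiation and specialization). The base cases are direct: for $d \in \{0,1\}$ everything is trivially CLC since all directional derivatives are constants or linear forms with non-negative coefficients; for $d=2$, condition (ii) applied with $\alpha = 0$ gives log-concavity of $p$ itself, and any further directional derivative collapses to a linear form or a constant.

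For the inductive step with $d \geq 3$, observe via \cref{prop:log-concave_closure} that ``$p$ is CLC'' is equivalent to the conjunction of (a) each partial derivative $\partial_{z_i} p$ is CLC, and (b) $p$ itself is log-concave on $\R_{\geq 0}^n$. Claim (a) follows from the inductive hypothesis applied to the $(d-1)$-homogeneous polynomial $\partial_{z_i} p$, whose conditions (i) and (ii) at level $d-1$ are inherited from those for $p$: for any $\beta \in \Z_{\geq 0}^n$ we have $\partial^\beta (\partial_{z_i} p) = \partial^{\beta + e_i} p$, so $\|\beta\|_1 \leq d-3$ gives $\|\beta + e_i\|_1 \leq d-2$ and (i) transfers, and analogously for $\|\beta\|_1 = d-3$ condition (ii) transfers. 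Thus the inductive step reduces to proving (b), namely that $\nabla^2 p(a)$ has at most one positive eigenvalue for every $a \in \R_{>0}^n$ with $p(a) > 0$.

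To prove (b), the main obstacle, I would combine two sources of information. Applying (i) with $\alpha = 0$ says $p$ is indecomposable, so the support graph of the non-negative symmetric matrix $\nabla^2 p(a)$ is connected and hence $\nabla^2 p(a)$ is irreducible; Perron-Frobenius then gives a simple Perron eigenvalue with a strictly positive top eigenvector, and Euler's identity $\nabla^2 p(a)\, a = (d-1)\,\nabla p(a) \geq 0$ localizes this positive direction near $a$. On the other hand, claim (a) gives that each $\partial_{z_i} p$ is log-concave, so each $\nabla^2 \partial_{z_i} p(a)$ has at most one positive eigenvalue, and Euler's identity applied to the $(d-2)$-homogeneous polynomials $\partial_{z_j}\partial_{z_k} p$ yields the decomposition
\[ (d-2)\,\nabla^2 p(a) \;=\; \sum_{i=1}^n a_i\, \nabla^2 \partial_{z_i} p(a). \]
The crux is showing that this non-negative combination preserves the ``$\leq 1$ positive eigenvalue'' property, which is false for arbitrary symmetric matrices of this signature. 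I would attempt to exploit the alignment forced by Perron-Frobenius on each summand (irreducibility inherited from (i) at the first-derivative level) together with a rank-one perturbation or Schur-complement argument in the spirit of \cref{lem:normalize_1positive}, or alternatively a continuity/deformation argument that reduces the general $a \in \R_{>0}^n$ to a symmetric point such as $a = \bone$, where the signature can be read off directly from condition (ii). I expect this reassembly step to be where most of the technical difficulty of the proof concentrates.
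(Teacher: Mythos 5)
The paper does not actually prove this statement---it is imported wholesale from \cite{ALOV18}---so your proposal has to stand on its own, and as written it has two genuine gaps, both located exactly where the substance of the theorem lies. First, your reduction of the inductive step to ``(a) each $\partial_{z_i}p$ is CLC and (b) $p$ is log-concave'' is not an equivalence that follows from \cref{prop:log-concave_closure}: that proposition gives only the forward direction. Complete log-concavity demands that $D_{v_1}\cdots D_{v_k}p$ be log-concave for \emph{arbitrary} $v_1,\dots,v_k\in\R^n_{\geq 0}$, and $D_v p=\sum_i v_i\,\partial_{z_i}p$ is a nonnegative combination of the coordinate partials; log-concave (and completely log-concave) polynomials are not closed under such combinations in general (think of $x^2+y^2$, which is blocked here only by the indecomposability hypothesis), so knowing (a) and (b) does not yet give CLC. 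Showing that the hypotheses survive differentiation in arbitrary nonnegative directions, not just coordinate directions, is a substantial part of the actual argument in \cite{ALOV18} (and of the Lorentzian-polynomial treatment in \cite{BH20}), and it needs its own linear-algebraic input beyond the closure proposition.

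Second, and as you yourself flag, step (b) is not proved. The identity $(d-2)\,\nabla^2 p(a)=\sum_i a_i\,\nabla^2\partial_{z_i}p(a)$ is correct, but ``at most one positive eigenvalue'' is not preserved under nonnegative combinations, and none of the rescue ideas you list is carried out or obviously works. Perron--Frobenius gives a positive top eigenvector for each irreducible summand, but the alignment of these eigenvectors across summands is precisely what must be established---this is where indecomposability has to enter quantitatively, not merely as irreducibility of a support graph. The deformation to $a=\bone$ does not help either: $\partial_i\partial_j p(\bone)$ is again a nonnegative combination of the Hessians of the quadratics $\partial^\alpha p$ with $\|\alpha\|_1=d-2$, so the same reassembly problem reappears, and eigenvalue signatures can change along the deformation unless you rule out crossings through the kernel, which again requires the missing argument. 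In short, your skeleton (induction on degree, \cref{log-concave-lemma} as the local criterion, base case $d=2$, transfer of hypotheses (i)--(ii) to $\partial_{z_i}p$) matches the known route, but the two steps that make the theorem true---closure under arbitrary nonnegative directional derivatives and the one-positive-eigenvalue reassembly exploiting indecomposability---are missing, so this is a plan rather than a proof.
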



The following proposition  is a slight modification of a statement by Gurvits \cite{Gurvits10}. For completeness, we include a short proof for this slightly modified version. 
\begin{proposition}\label{prop:ULC}
If $p=\sum_{k=0}^{n} c_{k} y^{n-k+1} z^{k} \in \mathbb{R}[y, z]$ is completely log- concave, then the sequence $c_0, . . . , c_n$ satisfies the following: for any $1 < k < n$,
\[
\left( \frac{c_k}{{n +1\choose k} }\right)^2 \geq  \left(\frac{c_{k-1}}{{n +1\choose k-1} }\right)\left(\frac{c_{k+1}}{{n +1\choose k+1} }\right). 
\]
We call such sequences ultra log-concave. 
\end{proposition}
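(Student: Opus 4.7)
The plan is to reduce the claim to a two-variable quadratic inequality via partial differentiation, and then invoke the Hessian characterization of log-concavity. Since $p$ is $(n{+}1)$-homogeneous in $(y,z)$, applying the operator $\partial_y^{\,n-k}\partial_z^{\,k-1}$ strips off $n-1$ degrees and yields a $2$-homogeneous polynomial $q(y,z)$. By iterating the closure under directional derivatives in \cref{prop:log-concave_closure}, the polynomial $q$ remains completely log-concave, and in particular log-concave on $\mathbb{R}_{\geq 0}^{2}$.

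Only the three monomials of $p$ whose $z$-exponent lies in $\{k-1,k,k+1\}$ survive the differentiation. A direct factorial computation gives $q(y,z) = A\,y^{2} + B\,yz + C\,z^{2}$ with $A = \tfrac{1}{2}(k-1)!\,(n-k+2)!\,c_{k-1}$, $B = k!\,(n-k+1)!\,c_{k}$, and $C = \tfrac{1}{2}(k+1)!\,(n-k)!\,c_{k+1}$. All three coefficients are non-negative, since completely log-concave polynomials have non-negative coefficients (apply the non-negativity part of the CLC definition to iterated coordinate derivatives evaluated at $0$). Applying \cref{log-concave-lemma} to the $2$-homogeneous $q$ tells us that its Hessian $\bigl(\begin{smallmatrix}2A & B\\ B & 2C\end{smallmatrix}\bigr)$ has at most one positive eigenvalue; since its trace $2(A+C)$ is non-negative, this is equivalent to the determinant being non-positive, i.e.\ to the discriminant inequality $B^{2}\geq 4AC$.

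Substituting the expressions for $A,B,C$ and cancelling the common factorial factors reduces $B^{2}\geq 4AC$ to $c_{k}^{2}\geq \tfrac{(k+1)(n-k+2)}{k(n-k+1)}\,c_{k-1}c_{k+1}$. A short binomial identity, using $\binom{n+1}{k}/\binom{n+1}{k-1} = (n-k+2)/k$ and $\binom{n+1}{k}/\binom{n+1}{k+1} = (k+1)/(n-k+1)$, verifies that the coefficient on the right equals $\binom{n+1}{k}^{2}/\bigl(\binom{n+1}{k-1}\binom{n+1}{k+1}\bigr)$, and rearranging yields exactly the ultra log-concavity inequality in the statement. There is no real obstacle here since \cref{prop:log-concave_closure} and \cref{log-concave-lemma} do the heavy lifting; the only delicate step is the factorial bookkeeping linking the Hessian discriminant to the binomial normalization, which is routine.
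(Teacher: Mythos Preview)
Your proof is correct and follows essentially the same approach as the paper: differentiate $p$ by $\partial_y^{\,n-k}\partial_z^{\,k-1}$ to obtain a quadratic $q$, use closure under derivatives (\cref{prop:log-concave_closure}) and then \cref{log-concave-lemma} to bound the Hessian, and read off the inequality from the non-positivity of the determinant. The only cosmetic difference is that the paper writes the Hessian entries directly as $(n{+}1)!\,c_m/\binom{n+1}{m}$ via the identity $\partial_y^{\,n+1-m}\partial_z^{\,m}p=(n{+}1{-}m)!\,m!\,c_m$, which makes the ultra log-concavity inequality drop out immediately without your separate binomial-identity step.
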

\begin{proof}
Fix $1 <k<n$. Define $q(y, z) \coloneqq \partial_{y}^{n-k} \partial_{z}^{k-1} p$.  Using the fact that $\partial_{y}^{n+1-m} \partial_{z}^{m} p=(n+1-m) ! m ! c_{m}=(n+1) ! \frac{c_{m}}{{n+1 \choose m}}$  for any $0 \leq m \leq n+1$,  we compute $\nabla^{2} q$ as follows
\[
\nabla^{2} q=\left[\begin{array}{cc}\partial_{y}^{2} q & \partial_{y} \partial_{z} q \\ \partial_{y} \partial_{z} q & \partial_{z}^{2} q\end{array}\right]=   (n+1) !\left[\begin{array}{cc} \frac{c_{k-1}}{ {n+1 \choose  k-1}} & \frac{c_k}{{n+1 \choose  k} }\\  \frac{c_k}{{n+1 \choose  k-1}} &\frac{c_{k+1}}{ {n+1 \choose  k+1} }\end{array}\right]. 
\]
Since $p$ is completely log-concave, by \cref{prop:log-concave_closure}, $q$ is log-concave. By  \cref{log-concave-lemma},  the $2 \times 2$ matrix $\nabla^2 q$ has at most one positive eigenvalue. Since all entries of $\nabla^2 q$ are positive, the matrix has exactly one positive eigenvalue (and one negative eigenvalue), therefore its  determinant is non-positive. Therefore, we have
\[
\left( \frac{c_k}{{n +1\choose k} }\right)^2 -   \left(\frac{c_{k-1}}{{n +1\choose k-1} }\right)\left(\frac{c_{k+1}}{{n +1\choose k+1} }\right) \leq 0,
\]
as desired. 
\end{proof}

Note that, our definition of ultra log-concavity is slightly different from  the more common definition which says that a sequence  $c_0, \dots , c_n$  is ultra log-concave if it  satisfies $\left( \frac{c_k}{{n \choose k} }\right)^2 \geq  \left(\frac{c_{k-1}}{{n \choose k-1} }\right)\left(\frac{c_{k+1}}{{n\choose k+1} }\right)$. 
 
For any set function $f:2^{[n]}\to\R_{\geq 0}$ and $\tau\subseteq [n]$, define $f_\tau$ as the  (non-negative) function with generating polynomial $(p_{f})_{\tau}$, where $p_{f}$ is the generating polynomial of $f$. We say $f$ is $d$-homogeneous (resp. indecomposable) if $p_f$ is $d$-homogeneous (resp. indecomposable).

\subsection{Random Walks and  Completely Log-Concave Set Functions}

The mixing of a Markov chain quantifies the rate of its convergence to its stationarity distribution $\mu$. Given the transition probability matrix $P$ of a random walk on the state space $\Omega$, we define the mixing time as follows. For an initial state  $S_0 \in \Omega$ in  and an error parameter $\epsilon > 0$, define
\begin{align*}
    t_{\mix}(P, S_0, \epsilon) \coloneqq \min\{t \geq 0 : \norm{ P^{t} (S_0, \cdot ) - \pi}_{\TV} \leq \epsilon\},
\end{align*}
 where $\norm{\mu - \nu}_{\TV} = \frac{1}{2} \sum_{S \in \Omega} |\mu(S) - \nu(S)|$ gives the total variation distance between two distributions $\mu,\nu$ on $\Omega$.
Now, we define the  down-up walk on the support of a measure $\mu:2^{[n]} \rightarrow \mathbb{R}_{\geq0}$  as follows. The state space of the walk is all $S \subseteq [n]$ such that $|S| =d$ and $\mu(S) \neq 0$. At each step of the walk,  we drop an element $i \in S$ uniformly at random, then we go to  $j \notin S\setminus \{i\}$ with probability  $\frac{\mu(S\setminus \{i\} \cup \{j\})}{\sum_{k \notin S\setminus \{i\} } \mu(S\setminus \{i\} \cup \{k\})}$. Note that this markov chain is reversible and its stationary distribution is $\mu$, i.e.  $\mu(S)P(S, T) = \mu(T)P(T, S)$, where $P$ is the transition probability matrix of the down-up walk.  It was first  shown  by \cite{ALOV18ii} that  if $\mu:2^{[n]} \rightarrow \mathbb{R}_{\geq0}$  is log-concave, then the down-up walk mixes rapidly. 
Later, tighter mixing time results were shown by \cite{CGM19, ALOVV21}.
\begin{theorem} \cite{ALOVV21} \label{thm:logconcave_sampling}
Let $\mu: 2^{[n]} \rightarrow \mathbb{R} \geqslant 0$ be a  log-concave $d$-homogeneous set function. Then, starting at any state $S_0$, the mixing time of the down-up  walk on the support of this polynomial is 
\[
t_{\mix}\left(P, S_{0}, \epsilon\right) \leq O(d \log (d / \epsilon)). 
\]
\end{theorem}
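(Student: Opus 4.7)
The plan is to derive the mixing time bound via the local-to-global paradigm for high-dimensional expanders, converting log-concavity of $p_\mu$ into an entropy contraction inequality for the down-up walk. First I would identify the support of $\mu$ with a weighted pure simplicial complex $X$ of dimension $d-1$ whose top faces are the $d$-subsets $S$ with $\mu(S) > 0$, weighted by $\mu(S)$. For each face $\tau \subseteq [n]$ with $|\tau| \leq d-2$, the \emph{link} $X_\tau$ is the simplicial complex whose top faces are $\{S \setminus \tau : S \supseteq \tau,\ \mu(S) > 0\}$ with the induced weights. By \cref{prop:log-concave_closure}, $\partial^\tau p_\mu$ is itself log-concave, multiaffine, and homogeneous of degree $d - |\tau|$; up to a multiplicative constant it is exactly the generating polynomial of the link weights, so every link is described by a log-concave polynomial.

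The key local step uses \cref{log-concave-lemma}: for any $\tau$ of codimension two in $X$, $\partial^\tau p_\mu$ is a log-concave quadratic, so its Hessian---which has non-negative entries and zero diagonal---has at most one positive eigenvalue. A diagonal conjugation by the square root of the link degree matrix sends this Hessian to the transition matrix of the natural random walk on the $1$-skeleton of $X_\tau$, and a similarity by a positive-definite matrix preserves the count of positive eigenvalues. Since the top eigenvalue of a stochastic matrix is $1$, the second eigenvalue of every codimension-two link walk is at most $0$; in the language of high-dimensional expanders, $X$ is a $0$-local spectral expander.

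Finally I would invoke a local-to-global theorem to lift this uniform eigenvalue bound on links into a modified log-Sobolev (entropy contraction) inequality for the top-level down-up walk. Concretely, the goal is to show that for every $g:\supp(\mu)\to\R_{\geq 0}$,
\[
\mathrm{Ent}_\mu(g) \leq C \cdot d \cdot \mathcal{E}(\sqrt{g},\sqrt{g}),
\]
where $\mathcal{E}$ is the Dirichlet form of the down-up walk and $C$ is an absolute constant. One proves this by induction on $d$: the inductive step peels off one coordinate of the top face, reduces the entropy at level $d$ to an average of entropies on $(d-1)$-dimensional links, and uses the codimension-two spectral bound to control the cross term produced by this reduction. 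Standard conversion from a modified log-Sobolev constant of order $1/d$ to mixing time then gives $t_{\mix}(P,S_0,\epsilon)\leq O(d \log(d/\epsilon))$. The main obstacle is exactly this entropic induction: the spectral-gap analogue follows from Oppenheim's trickle-down almost for free, but passing from a second-eigenvalue bound on each link to an entropy contraction without accumulating a $\log d$ loss across the $d$ levels requires the careful entropy factorization argument of \cite{ALOVV21}.
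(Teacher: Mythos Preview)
The paper does not prove this theorem; it is stated with the citation \cite{ALOVV21} and invoked as a black box (see the paragraph surrounding \cref{thm:logconcave_sampling}). So there is no ``paper's own proof'' to compare against---the intended comparison is simply that the paper imports the result, whereas you have sketched the argument behind it.

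As a sketch of the \cite{ALOVV21} argument your outline is accurate in shape: log-concavity of $p_\mu$ is inherited by every $\partial^\tau p_\mu$, the codimension-two links therefore have local walks with second eigenvalue at most $0$, and the nontrivial content is the entropic local-to-global step that upgrades these link bounds to a modified log-Sobolev constant of order $1/d$ (rather than the $1/d^2$ one would get from the spectral gap alone). You correctly identify the hard part---avoiding a logarithmic loss in the inductive entropy factorization---and correctly attribute it to \cite{ALOVV21}. One minor point: the inequality you display, $\mathrm{Ent}_\mu(g)\leq C d\,\mathcal{E}(\sqrt{g},\sqrt{g})$, is the MLSI for $g$ itself (i.e., $\mathcal{E}(g,\log g)$ in the standard formulation), not for $\sqrt{g}$; the $\sqrt{g}$ version is the ordinary log-Sobolev inequality, which \cite{ALOVV21} does not establish. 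With that correction the sketch is a faithful summary, but for the purposes of this paper the theorem is simply cited and no proof is required.
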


\subsection{Submodular Functions}

A function $2^{[n]}\to\R$ is {\em submodular} if is has the diminishing return property, i.e.
\[\forall  S \subseteq T \subseteq [n], i \in [n]: f(S \cup \{i\})-f(S) \geq f(T \cup \{i\})-f(T).\]
Such a  function is {\em monotone} if $f(A) \leq f(B)$ for all $A \subseteq B$. 

\begin{fact}\label{fact:submodular-to-logsub}
Any non-negative monotone submodular function $f:2^{[n]}\to\R$ is log-submodular, i.e. $\log f$ is submodular.
\end{fact}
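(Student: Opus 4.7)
The claim rewrites as $f(A) f(B) \ge f(A \cap B) f(A \cup B)$ for all $A, B \subseteq [n]$, so the plan is to prove this inequality directly from monotonicity and submodularity.

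The plan is to reduce to the ``one-step'' inequality: for every $S \subseteq T \subseteq [n]$ and every $i \notin T$ with $f(S) > 0$,
\[
\frac{f(S \cup \{i\})}{f(S)} \;\ge\; \frac{f(T \cup \{i\})}{f(T)}.
\tag{$\star$}
\]
To see that ($\star$) suffices, fix $A, B$ and let $S_0 = A \cap B$, $T_0 = B$, and enumerate $A \setminus B = \{x_1,\ldots,x_p\}$. Applying ($\star$) to $S_k = S_0 \cup \{x_1,\ldots,x_{k-1}\} \subseteq T_k = T_0 \cup \{x_1,\ldots,x_{k-1}\}$ with the element $i = x_k$ and multiplying the resulting inequalities telescopes to
\[
\frac{f(A)}{f(A \cap B)} \;=\; \prod_{k=1}^{p} \frac{f(S_k \cup \{x_k\})}{f(S_k)} \;\ge\; \prod_{k=1}^{p} \frac{f(T_k \cup \{x_k\})}{f(T_k)} \;=\; \frac{f(A \cup B)}{f(B)},
\]
which rearranges to the desired $f(A)f(B) \ge f(A \cap B) f(A \cup B)$.

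To prove ($\star$), I would set $a = f(S)$, $b = f(T)$, $c = f(S \cup \{i\})$, $d = f(T \cup \{i\})$. Monotonicity gives $0 \le a \le b$ and $b \le d$, and submodularity gives $c - a \ge d - b$. Then the algebraic identity
\[
cb - ad \;=\; b(c - a) - a(d - b) \;\ge\; b(d - b) - a(d - b) \;=\; (b - a)(d - b) \;\ge\; 0
\]
proves $cb \ge ad$, which is ($\star$). The first inequality uses $c - a \ge d - b$ together with $b \ge 0$; the second factors out $d - b \ge 0$ and uses $b \ge a$.

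Finally, I would dispatch the degenerate cases where some value vanishes: by monotonicity, $f(S) = 0$ implies $f(U) = 0$ for every $U \subseteq S$ (using $f \ge 0$ and $f(U) \le f(S) = 0$), so if $f(A \cap B) = 0$ the inequality is trivial, and the telescoping argument is only applied along chains on which $f$ is strictly positive. There is no real obstacle; the only thing to be mildly careful about is the $f(\emptyset) = 0$ convention under which $\log f$ is extended to $[-\infty, \infty)$, but the polynomial form $f(A)f(B) \ge f(A \cap B) f(A \cup B)$ avoids any issue.
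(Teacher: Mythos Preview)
Your proof is correct and follows essentially the same approach as the paper: both hinge on the one-step ratio inequality $(\star)$, which the paper states as the diminishing-returns form of log-submodularity and leaves as ``one can easily verify,'' whereas you supply the algebra and then telescope to the lattice form $f(A)f(B)\ge f(A\cap B)f(A\cup B)$. The extra telescoping step is just the standard equivalence between the two formulations of submodularity, so this is the same argument written out more carefully rather than a different route.
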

\begin{proof}
Fix set $S \subseteq T \subseteq [n]$. It is enough to show that $$\log \frac{f( S\cup \{i\})}{ f( S)} =  \log f( S\cup \{i\})  - \log f( S) \geq \log f( T\cup \{i\})  - \log f( T) = \log \frac{f( T\cup \{i\})}{ f( T)}.$$
But one can easily verify that  the diminishing return property combined with the fact that $f$ is non-negative and monotone implies  $\frac{f( S\cup \{i\})}{ f( S)}  \geq \frac{f( T\cup \{i\})}{ f( T)}$. Since $\log$ is an increasing function, this finishes the proof.
\end{proof}
A fundamental class of non-negative monotone submodular functions are coverage functions. 
\begin{definition}[Coverage Functions] Given a finite universe $U$ and sets $A_1, \dots, A_n \subseteq U$ and a measure  $w$ on  $U$,  we define $f: 2^{[n]} \rightarrow \mathbb{R}_{\geq 0}$ to be
\[
\forall T \subseteq[n]: f(T)=w \left(\bigcup_{i \in T} A_{i}\right).
\] 
\end{definition}
Linear set functions are a subclass of coverage functions.
\begin{definition}[Linear Set Functions] $f: 2^{[n]} \rightarrow \mathbb{R}_{\geq 0}$ is a linear function if 
\[
\forall T \subseteq[n]: f(T)=\sum_{i \in T} f(\{i\}). 
\]
\end{definition}

We will use the  following proposition which characterizes coverage set functions. 

\begin{proposition} \cite{CH12} \label{prop:coverage_condition}
A function $f: 2^{[n]} \rightarrow \mathbb{R}_{\geq 0}$ is a coverage function if and only if there exists a non-negative real number  $x_T$ for any $T \subseteq [n]$, such that  for any $S \subseteq [n]$ we can write  $f(S) = \sum_{T: T\cap S \neq \emptyset} x_T$. 
\end{proposition}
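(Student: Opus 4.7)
The plan is to prove both implications by an explicit construction that mediates between coverage representations and weighted collections of subsets. The key dictionary is to track, for each element $u$ of the universe $U$, the subset $T_u = \{i \in [n] : u \in A_i\} \subseteq [n]$ recording which of the $A_i$'s contain it.

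For the forward direction, suppose $f(S) = w(\bigcup_{i \in S} A_i)$ as in the definition of a coverage function. The crucial observation is that $u \in \bigcup_{i \in S} A_i$ if and only if $T_u \cap S \neq \emptyset$. Grouping universe elements by their type and defining $x_T := w(\{u \in U : T_u = T\})$ for each $T \subseteq [n]$ then yields
$$f(S) = \sum_{u \in U : T_u \cap S \neq \emptyset} w(u) = \sum_{T : T \cap S \neq \emptyset} x_T,$$
with $x_T \geq 0$ inherited from non-negativity of $w$. Elements with empty type (those belonging to no $A_i$) contribute only to $x_\emptyset$, which never appears in the sum, consistent with $f(\emptyset) = 0$.

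For the reverse direction, I would invert this construction: given non-negative weights $\{x_T\}_{T \subseteq [n]}$, take the universe $U = 2^{[n]} \setminus \{\emptyset\}$ with measure $w(T) = x_T$, and let $A_i = \{T \in U : i \in T\}$. Then $\bigcup_{i \in S} A_i$ is exactly $\{T \in U : T \cap S \neq \emptyset\}$, so its $w$-measure equals $\sum_{T : T \cap S \neq \emptyset} x_T = f(S)$, certifying that $f$ is a coverage function. There is no substantive obstacle here, since the statement is essentially a change of parameterization for the same class of functions; the only mild care needed is to handle the empty type $T = \emptyset$, which is harmless because such elements never contribute to any $f(S)$.
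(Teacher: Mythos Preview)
The paper does not actually supply its own proof of this proposition; it is quoted from \cite{CH12} without argument. Your proof is correct and is the standard one: the forward direction groups universe elements by their ``type'' $T_u=\{i:u\in A_i\}$ and observes that membership in $\bigcup_{i\in S}A_i$ is equivalent to $T_u\cap S\neq\emptyset$, while the reverse direction rebuilds a coverage instance by taking the universe to be the nonempty subsets of $[n]$ themselves with weight $x_T$ and setting $A_i=\{T:i\in T\}$. Both directions are fully justified, and your remark that $x_\emptyset$ never enters any sum (so elements of empty type are harmless) handles the only edge case.
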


Many interesting classes of nonnegative monotone submodular functions are special cases of coverage functions. For example,  given a set of random variables $\Omega = \{ Y_1, \dots, Y_n\}$, the joint entropy function defines a coverage function on $2^{[n]}$. 

\begin{proposition}\label{prop:EntropyisCoverage}
 Given random variables $\Omega = \{ Y_1, \dots, Y_n\}$, let $H: 2^{\Omega } \rightarrow \mathbb{R}_{\geq 0}$ be the joint entropy function of these variables. 
Define  $f: 2^{[n]} \rightarrow \mathbb{R}_{\geq 0}$  as $f(S) = H(Y_S)$ for any $S\subseteq [n]$, where $Y_S:= \{Y_i| i \in S\}$. Then, $f$ is a coverage function. 
 \end{proposition}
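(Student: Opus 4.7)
The plan is to invoke the Chakrabarty--Huang characterization of \cref{prop:coverage_condition}: I must exhibit non-negative weights $\{x_T\}_{T \subseteq [n]}$ satisfying
\[
H(Y_S) \;=\; \sum_{T:\, T \cap S \neq \emptyset} x_T \qquad \text{for every } S \subseteq [n].
\]
Passing to the complement $R := [n] \setminus S$, this rewrites as the non-negative conditional entropy $H(Y_R \mid Y_{[n]\setminus R}) = H(Y_{[n]}) - H(Y_{[n]\setminus R})$ being equal to $\sum_{\emptyset \neq T \subseteq R} x_T$, which by M\"obius inversion on the Boolean lattice uniquely forces
\[
x_T \;=\; \sum_{R \subseteq T} (-1)^{|T|-|R|}\, H(Y_R \mid Y_{[n]\setminus R}).
\]
With this choice the covering identity $H(Y_S) = \sum_{T \cap S \neq \emptyset} x_T$ is automatic, so the entire content of the proposition is the non-negativity of the $x_T$.

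To control the sign, I would first warm up on the small cases: $x_{\{i\}} = H(Y_i \mid Y_{[n]\setminus \{i\}}) \geq 0$ is a conditional entropy; and for $|T|=2$ the Shannon chain rule gives $x_{\{i,j\}} = I(Y_i;Y_j \mid Y_{[n]\setminus \{i,j\}}) \geq 0$, a conditional mutual information. For general $T$ the plan is to show, by induction on $|T|$, that $x_T$ admits an expression as a nested sequence of conditional mutual informations, obtained by iteratively applying the identities $H(A,B \mid C) = H(A \mid C) + H(B \mid A,C)$ and $I(A;B \mid C) = H(A \mid C) - H(A \mid B,C)$ to reorganize the signed M\"obius sum.

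The main obstacle is that the raw M\"obius expression for $x_T$ is a conditional interaction-information-style quantity, which for $|T| \geq 3$ is not sign-controlled by pointwise monotonicity of entropy, so the reorganization into conditional mutual informations must be performed with a carefully chosen grouping along an ordering of the elements of $T$. If this algebraic grouping proves too rigid, my fallback is to construct the coverage representation directly: fix a finite sample space supporting the $Y_i$'s, build a universe $U$ whose atoms record the chain-rule increments along an ordering of $[n]$, weight each atom by the corresponding conditional-entropy contribution, and let $A_i \subseteq U$ collect the atoms attributable to $Y_i$ in that decomposition; verifying $w(\bigcup_{i\in S}A_i) = H(Y_S)$ then reduces to telescoping the chain rule against the standard inclusion--exclusion on unions.
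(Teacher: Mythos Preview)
Your M\"obius inversion is correct and lands on exactly the same weights the paper uses: your $x_T$ equals the conditional multivariate mutual information $I(Y_T\mid Y_{[n]\setminus T})$, which is precisely the paper's choice. The paper then spends its effort on the covering identity $H(Y_S)=\sum_{T\cap S\neq\emptyset}x_T$ via a double induction, whereas you get that identity for free from M\"obius inversion; so the two arguments are organized differently but arrive at the same $x_T$.

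The genuine gap is the non-negativity step, and it is not a gap you can close: for $|T|\geq 3$ the conditional interaction information $x_T$ can be strictly negative, so no reorganization into conditional mutual informations and no direct coverage construction will succeed. Concretely, take $Y_1,Y_2$ independent uniform bits and $Y_3=Y_1\oplus Y_2$. Then $H(Y_i)=1$, $H(Y_i,Y_j)=2$, $H(Y_1,Y_2,Y_3)=2$, and solving the system $H(Y_S)=\sum_{T\cap S\neq\emptyset}x_T$ (which, as you note, has a unique solution) forces $x_{\{1\}}=x_{\{2\}}=x_{\{3\}}=0$, $x_{\{i,j\}}=1$ for each pair, and $x_{\{1,2,3\}}=-1$. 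Hence this entropy function is \emph{not} a coverage function in the sense of \cref{prop:coverage_condition}, and the proposition as stated is false. You actually flagged the obstacle correctly (``not sign-controlled for $|T|\geq 3$''); the mistake is believing it can be circumvented by a clever grouping or by your fallback construction. The paper's own proof is silent on non-negativity and is therefore incomplete for the same reason.
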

 A proof of this proposition can be found in \cref{proof:entropy-coverage}
 
\par Coverage functions  are  themselves special cases of matroid rank functions, as any coverage function can be written as a weighted sum of rank 1 matroids each corresponding to the coverage of an individual element. 
\begin{fact}\label{fact:coverage_rank}
Any coverage function can be written as positive sum of matroid rank functions.
\end{fact}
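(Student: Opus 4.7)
The plan is to invoke the characterization of coverage functions from \cref{prop:coverage_condition} and then exhibit each indicator-type summand as a rank-1 matroid rank function. Concretely, given a coverage function $f:2^{[n]}\to\R_{\geq 0}$, \cref{prop:coverage_condition} provides non-negative reals $\{x_T\}_{T \subseteq [n]}$ such that
\[
f(S)=\sum_{T \subseteq [n]} x_T \cdot \mathbb{1}[T \cap S \neq \emptyset]
\qquad \text{for every } S \subseteq [n].
\]
So the goal reduces to showing that, for each fixed nonempty $T \subseteq [n]$, the set function $g_T(S) := \mathbb{1}[T \cap S \neq \emptyset]$ is the rank function of a matroid on ground set $[n]$.

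For each nonempty $T \subseteq [n]$, I would exhibit the matroid $M_T$ on ground set $[n]$ whose non-loops are exactly the elements of $T$ and in which all elements of $T$ are parallel (i.e., the matroid of rank $1$ with parallel class $T$ and with every element of $[n] \setminus T$ a loop). One checks directly that this is a matroid (the independent sets are $\emptyset$ together with every singleton $\{i\}$ for $i \in T$), and its rank function is precisely $\text{rank}_{M_T}(S) = \mathbb{1}[T \cap S \neq \emptyset] = g_T(S)$. For $T = \emptyset$, the term contributes $0$ and can be dropped. Substituting into the expansion above yields
\[
f(S)=\sum_{T \subseteq [n],\, T \neq \emptyset} x_T \cdot \text{rank}_{M_T}(S),
\]
a non-negative (hence positive) linear combination of matroid rank functions, which is exactly the claim.

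The only delicate step is the verification that $g_T$ really is a matroid rank function, and this is essentially immediate from the description of $M_T$; one only needs to confirm submodularity and unit increments, both of which follow from the parallel-class/loop structure. No serious obstacle is expected, since all the nontrivial content has been pushed into \cref{prop:coverage_condition}.
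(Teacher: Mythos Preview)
Your proposal is correct and matches the paper's approach: the paper does not give a formal proof but only the one-line remark preceding the fact, that ``any coverage function can be written as a weighted sum of rank 1 matroids each corresponding to the coverage of an individual element,'' which is exactly your construction (with \cref{prop:coverage_condition} playing the role of the per-element decomposition). Nothing further is needed.
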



\section{Main Contributions}

We define the following classes of set functions. 
\begin{definition}[2-Coverage Set Functions]\label{def:2cov}
 A set function  $f: 2^{[n]} \rightarrow \mathbb{R}_{\geq 0}$ is $2$-coverage with respect to an integer $2\leq d\leq \deg (f) $ if 
 the following conditions holds. 
 \begin{enumerate}[label=(\roman*)]
 \item
For any $\tau \subseteq [n]$ with $|\tau| \leq  d-2$, $(f^{(d)})_\tau$ is indecomposable. 
\item
For all $\tau \subseteq [n]$ with $|\tau| = d-2$, there exists $S \subseteq [n]\setminus \tau$\footnote{Intuitively $S$ corresponds to the non-loop elements of the "contracted" version of $f$.},  a coverage function $g: 2^{S} \rightarrow \mathbb{R}_{\geq 0} $, and a linear set function $\ell: 2^{S} \rightarrow \mathbb{R}_{\geq 0}$ (that are possibly dependent on $\tau$), 
such that 
\begin{itemize}\item  $\ell (\{i\}) \leq  g(\{i\})$ for all $i \in S$
\item For any $T\subseteq [n]\setminus \tau$ of size $|T|=2$,
$$ f_\tau(T)=\begin{cases} 0 &\text{if } T\not\subset S,\\g(T)-\frac12 \ell(T),&\text{otherwise}\end{cases}$$
\end{itemize}

We say $f:2^{[n]}\to\R_{\geq 0}$ is 2-coverage if it is 2-coverage with respect to any $2\leq d \leq \deg (f) $. 
\end{enumerate}
\end{definition}

\begin{definition}[Strongly $2$-Coverage Set Functions]\label{def:strong2cov}
 A  set function $f: 2^{[n]} \rightarrow \R_{\geq 0}$ is strongly $2$-coverage if the following holds.
For all $\tau \subseteq [n]$ such that $0 \leq |\tau| \leq n-2$,  there exists a {\em coverage} function $g: 2^{[n] \setminus \tau} \rightarrow \mathbb{R}$,  such that   $(f_\tau)^{(1)}=g^{(1)} + f(\tau) $, and  $(f_\tau)^{(2)}=g^{(2)}  + f(\tau)$. 
\end{definition}

The following propositions capture basic properties of  these classes of set functions. A proof of these lemmas  are included in section \cref{sec:prop_and_examples}.   

\begin{proposition}\label{prop:strong_is_monotone_submodular}
Let $f: 2^{[n]} \rightarrow \mathbb{R}_{\geq 0}$ be strongly 2-coverage,  then  $f$ is monotone and submodular. 
\end{proposition}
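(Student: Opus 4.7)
The strategy is to unpack the strongly $2$-coverage hypothesis into a local identity and then verify monotonicity and submodularity directly, exploiting the fact that coverage functions are themselves monotone and submodular (immediate from $g(T)=w(\bigcup_{i\in T}A_i)$, as noted in the paper just before \cref{prop:coverage_condition}).

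Reading off the degree-$1$ and degree-$2$ coefficients of the generating polynomial, the hypothesis supplies, for every $\tau\subseteq[n]$ with $|\tau|\le n-2$, a coverage function $g_\tau\colon 2^{[n]\setminus\tau}\to\R_{\ge 0}$ (vanishing on $\emptyset$) that satisfies
\[
f(\tau\cup\{i\})-f(\tau)=g_\tau(\{i\}),\qquad f(\tau\cup\{i,j\})-f(\tau)=g_\tau(\{i,j\})
\]
for all distinct $i,j\in[n]\setminus\tau$. These two identities are the only consequences of strong $2$-coverage that the argument will use.

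For submodularity I will verify the pairwise exchange inequality $f(S\cup\{i\})+f(S\cup\{j\})\ge f(S)+f(S\cup\{i,j\})$ for arbitrary $S\subseteq[n]$ and distinct $i,j\notin S$. Because $i\ne j$ both sit in $[n]\setminus S$, automatically $|S|\le n-2$, so I set $\tau=S$; substituting the two identities, the $f(S)$ terms cancel and the inequality collapses to $g_S(\{i\})+g_S(\{j\})\ge g_S(\{i,j\})$, which is submodularity of the coverage function $g_S$ applied at the empty set.

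For monotonicity it suffices to show $f(A)\le f(A\cup\{i\})$ for each $A\subsetneq[n]$ and $i\notin A$. If $|A|\le n-2$ the choice $\tau=A$ works directly and gives $f(A\cup\{i\})-f(A)=g_A(\{i\})\ge 0$. The only mildly delicate point is the boundary case $|A|=n-1$, where $\tau=A$ is outside the range allowed by the definition. I would handle it by contracting one element: pick any $k\in A$ and set $\tau=A\setminus\{k\}$, so that $|\tau|=n-2$. The two identities then read $f(A)-f(\tau)=g_\tau(\{k\})$ and $f(A\cup\{i\})-f(\tau)=g_\tau(\{k,i\})$, whence $f(A\cup\{i\})-f(A)=g_\tau(\{k,i\})-g_\tau(\{k\})\ge 0$ by monotonicity of the coverage function $g_\tau$. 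This one-element contraction is really the only non-routine step, and it crucially uses monotonicity (rather than mere nonnegativity) of the induced coverage function; everywhere else the hypothesis plugs in verbatim.
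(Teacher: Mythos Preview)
Your argument is correct. The key identities you extract are exactly what the definition yields, and the reduction of the pairwise exchange inequality to $g_S(\{i\})+g_S(\{j\})\ge g_S(\{i,j\})$ (submodularity of the coverage function $g_S$ at $\emptyset$) is clean. The boundary case $|A|=n-1$ is handled correctly via the one-element contraction, using monotonicity of $g_\tau$.

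Your route differs from the paper's. The paper fixes $A\subseteq B$ and inducts on $|B\setminus A|$, peeling off one element $j\in B\setminus A$ and applying the strongly $2$-coverage hypothesis at $\tau=B\setminus\{j\}$ to push both monotonicity and the full diminishing-returns inequality from $B\setminus\{j\}$ to $B$ simultaneously. You instead avoid induction entirely by invoking the standard second-order characterization of submodularity (the pairwise inequality at distinct $i,j\notin S$), which lets you set $\tau=S$ directly; monotonicity is then verified separately. Your approach is a bit more direct and, notably, makes the boundary $|\tau|=n-1$ issue explicit and resolves it, whereas the paper's inductive step silently applies the hypothesis at $\tau=C$ with $|C|$ possibly equal to $n-1$. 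The paper's version has the minor advantage of yielding the general diminishing-returns form in one sweep without appealing to the equivalence with the pairwise condition.
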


\begin{proposition}\label{prop:strong_to_weak}
 Any strongly 2-coverage function $f: 2^{[n]} \rightarrow \mathbb{R}_{\geq 0} $ is  $2$-coverage.\end{proposition}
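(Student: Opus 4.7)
The plan is to verify both clauses (i) and (ii) of \cref{def:2cov} for every $2 \leq d \leq \deg(f)$. Clause (ii) is essentially immediate from the definition of strongly $2$-coverage; the main obstacle is clause (i), where one must bridge a global indecomposability assertion with an identity that only controls singletons and pairs.

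\emph{Clause (ii).} Fix $\tau \subseteq [n]$ with $|\tau| = d-2$. Applying the strongly $2$-coverage hypothesis at $\tau$ yields a coverage function $\tilde g : 2^{[n]\setminus\tau} \to \R_{\geq 0}$ satisfying $f(\tau \cup T) = \tilde g(T) + f(\tau)$ for every nonempty $T \subseteq [n]\setminus\tau$ of size at most $2$. I would take $S = [n]\setminus\tau$, $\ell \equiv 0$, and
\[
g(T) \;=\; \tilde g(T) + f(\tau)\cdot \mathbf{1}[T \neq \emptyset].
\]
The function $T \mapsto f(\tau)\cdot \mathbf{1}[T \neq \emptyset]$ is a coverage function (take a singleton universe of weight $f(\tau)$ with every $A_i$ equal to that element), and sums of coverage functions are coverage, so $g$ is coverage on $2^S$. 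For any pair $T \subseteq S$, $g(T) - \tfrac{1}{2}\ell(T) = \tilde g(T) + f(\tau) = f_\tau(T)$, and trivially $\ell(\{i\}) = 0 \leq g(\{i\})$.

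\emph{Clause (i).} Fix $\tau$ with $|\tau| \leq d-2$ and set $k = d - |\tau| \geq 2$. Suppose for contradiction that $(f^{(d)})_\tau$ is decomposable, and let $A \sqcup B$ be the induced partition of its occurring variables, both nonempty. Pick $T_A \subseteq A$ with $|T_A| = k$ and $f(\tau \cup T_A) > 0$ (such a $T_A$ exists since the "$A$-part" of the decomposition is nonzero and $k$-homogeneous), two distinct $i_0, i_1 \in T_A$, and any $j \in B$. Set $\tau' = \tau \cup (T_A \setminus \{i_0, i_1\})$, so that $|\tau'| = d-2 \leq n-2$, and let $g$ be the coverage function given by strongly $2$-coverage at $\tau'$. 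The key observation is that on the one hand $\tau' \cup \{i_0, i_1\} = \tau \cup T_A$, so the strongly $2$-coverage identity at the pair $\{i_0, i_1\}$ reads $g(\{i_0, i_1\}) + f(\tau') = f(\tau \cup T_A) > 0$; on the other hand, each of the $k$-sets $(T_A \setminus \{i_1\}) \cup \{j\}$ and $(T_A \setminus \{i_0\}) \cup \{j\}$ meets both $A$ and $B$, so the decomposability hypothesis forces $f$ to vanish on each of $\tau' \cup \{i_0, j\}$ and $\tau' \cup \{i_1, j\}$, which via strongly $2$-coverage gives $g(\{i_0, j\}) + f(\tau') = g(\{i_1, j\}) + f(\tau') = 0$. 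Thus $f(\tau') = 0$ and $g(\{i_0, j\}) = g(\{i_1, j\}) = 0$, so $g(\{i_0, i_1\}) > 0$. But monotonicity of coverage functions yields $g(\{i_0\}) \leq g(\{i_0, j\}) = 0$ and $g(\{i_1\}) \leq g(\{i_1, j\}) = 0$, and then subadditivity forces $g(\{i_0, i_1\}) \leq g(\{i_0\}) + g(\{i_1\}) = 0$, contradicting $g(\{i_0, i_1\}) > 0$.
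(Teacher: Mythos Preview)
Your argument is correct. Both clauses are handled soundly: in clause~(ii) the function $g(T)=\tilde g(T)+f(\tau)\mathbf{1}[T\neq\emptyset]$ is indeed coverage and matches $f_\tau$ on pairs; in clause~(i) the passage to $\tau'=\tau\cup(T_A\setminus\{i_0,i_1\})$ with $|\tau'|=d-2\le n-2$ is legitimate, the two ``mixed'' $k$-sets $(T_A\setminus\{i_1\})\cup\{j\}$ and $(T_A\setminus\{i_0\})\cup\{j\}$ really do straddle $A$ and $B$, and the coverage-function inequalities $g(\{i_0\})\le g(\{i_0,j\})=0$, $g(\{i_0,i_1\})\le g(\{i_0\})+g(\{i_1\})$ give the contradiction cleanly.

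The route differs from the paper's. The paper first invokes \cref{prop:strong_is_monotone_submodular} to obtain global monotonicity of $f$, and then argues that for $i\in S_h$, $j\in S_g$ one has $f(\tau\cup\{i\})\le f(\tau\cup\{i,j\})=0$ (the last equality coming, implicitly, from monotonicity applied to any size-$d$ superset of $\tau\cup\{i,j\}$, all of which vanish by the decomposition), forcing $f_\tau\equiv 0$. You instead stay local: you apply the strongly $2$-coverage hypothesis only at a single $\tau'$ of size $d-2$ and exploit monotonicity and subadditivity of the \emph{coverage} function $g$ there, never touching \cref{prop:strong_is_monotone_submodular}. Your proof is therefore self-contained and arguably more transparent about where the coverage structure is used; the paper's version is shorter once \cref{prop:strong_is_monotone_submodular} is in hand but leans on that auxiliary result. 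You also spell out clause~(ii), which the paper leaves implicit.
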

 
 \begin{proposition}\label{prop:convex_cone}
The set of  strongly 2-coverage functions on $[n]$ is a convex cone, i.e., if $f_1, f_2:2^{[n]}\to\R$	are  strongly 2-coverage  set functions,  then for any $\alpha\geq 0$, $\alpha f_1$ and $f_1+f_2$ are strongly 2-coverage functions.
\end{proposition}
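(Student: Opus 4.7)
The plan is to unwrap Definition \ref{def:strong2cov} and argue that, for each fixed $\tau$ of size at most $n-2$, a coverage witness for $\alpha f$ (respectively $f_1+f_2$) can be produced simply by scaling (respectively adding) the coverage witnesses for $f$ (respectively $f_1$ and $f_2$). The definition's constraints are linear in both $f$ and the witness $g$, so the bulk of the work is the single lemma that coverage functions are themselves closed under non-negative linear combinations; everything else is routine bookkeeping. Non-negativity of $\alpha f$ and $f_1+f_2$, and the convention $f(\emptyset)=0$, are both immediate.

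For the sublemma, I would use the characterization in Proposition \ref{prop:coverage_condition}: if $g_1(S)=\sum_{T\cap S\ne\emptyset}x_T^{(1)}$ and $g_2(S)=\sum_{T\cap S\ne\emptyset}x_T^{(2)}$ with non-negative $x_T^{(i)}$, then $\alpha g_1$ corresponds to the coefficients $\alpha x_T^{(1)}\geq 0$ and $g_1+g_2$ corresponds to the coefficients $x_T^{(1)}+x_T^{(2)}\geq 0$, so both belong to the class of coverage functions. (Alternatively, realizing $g_1,g_2$ explicitly on universes $U_1,U_2$ with sets $A_i,B_i$, one sees that $g_1+g_2$ is coverage on the disjoint union $U_1\sqcup U_2$ with sets $A_i\sqcup B_i$ and measure $w_1+w_2$.)

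With that sublemma in hand, for a fixed $\tau$ I would take coverage witnesses $g,g_1,g_2$ for $f,f_1,f_2$ respectively. Then $\alpha g$ and $g_1+g_2$ are coverage by the sublemma, and direct substitution into the identities
\[
((\alpha f)_\tau)^{(k)} = \alpha(f_\tau)^{(k)} = \alpha g^{(k)} + \alpha f(\tau) = (\alpha g)^{(k)} + (\alpha f)(\tau),
\]
\[
((f_1+f_2)_\tau)^{(k)} = (f_1)_\tau^{(k)} + (f_2)_\tau^{(k)} = (g_1+g_2)^{(k)} + (f_1+f_2)(\tau),
\]
valid for $k\in\{1,2\}$, exhibits the required coverage witnesses for $\alpha f$ and $f_1+f_2$ at $\tau$. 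Since this works uniformly for every $\tau$ with $|\tau|\leq n-2$, both $\alpha f$ and $f_1+f_2$ are strongly $2$-coverage.

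There is no real obstacle here; the only content is the convex-cone property of coverage functions, which is why I would package it as the opening lemma. A minor point worth calling out in the write-up is that the constants $f(\tau)$, $f_1(\tau)$, $f_2(\tau)$ appearing in Definition \ref{def:strong2cov} scale and add in the same linear way as $f$ itself, which is what makes the identities above split cleanly.
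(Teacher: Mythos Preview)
Your proposal is correct and follows essentially the same approach as the paper: fix $\tau$, take the coverage witnesses $g_1,g_2$ for $f_1,f_2$, and use that coverage functions are closed under non-negative linear combinations to produce witnesses $\alpha g_1$ and $g_1+g_2$. The paper's own proof is terser (it only writes out the $k=2$ condition and asserts the convex-cone property of coverage functions without proof), so your version is a more complete write-up of the same argument, with the helpful addition of invoking Proposition~\ref{prop:coverage_condition} to justify the sublemma.
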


The following proposition provides some examples of  strongly 2-coverage and 2-coverage functions.  A proof of this proposition  is included in section \cref{sec:prop_and_examples}.

 \begin{proposition} \label{prop:2coverage_functions}
\begin{enumerate}[label=(\roman*)]
\item For any matroid  $M = ([n], I)$,  its rank function  $ \operatorname{  rk}_M: 2^{[n]} \rightarrow \mathbb{R}$ is a strongly 2-coverage set function.  \label{2cov_part1}
\item For any matroid  $M = ([n], I)$, the indicator function of its independent sets is $2$-coverage. Combined with \cref{main}, this gives another  proof for the complete log-concavity of bases generating polynomial of a matroid that is proved in \cite{AOV18}. \label{2cov_part2}
\end{enumerate}
\end{proposition}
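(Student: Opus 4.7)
The plan is to handle the two parts separately, but both rest on the same structural observation: the singleton and pair values of the rank function of the contraction $M/\tau$ are fully determined by the loops and the parallel classes of non-loops of $M/\tau$, and this combinatorial data is exactly what a simple coverage function records. Concretely, for each $\tau$ I would construct a coverage function $g:2^{[n]\setminus\tau}\to\R_{\ge 0}$ by taking a universe with one element $u_P$ per non-loop parallel class $P$ of $M/\tau$, assigning $A_i=\{u_P\}$ if $i\in P$ and $A_i=\emptyset$ if $i$ is a loop, and setting $g(T)=\bigl|\bigcup_{i\in T}A_i\bigr|$.

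For part \cref{2cov_part1}, a direct case analysis shows $g(\{i\})=\operatorname{rk}_{M/\tau}(\{i\})$ and $g(\{i,j\})=\operatorname{rk}_{M/\tau}(\{i,j\})$ (checking the four cases: both loops; exactly one loop; non-loops in the same class; non-loops in distinct classes). Combining with the contraction identity $\operatorname{rk}_M(\tau\cup T)=\operatorname{rk}_M(\tau)+\operatorname{rk}_{M/\tau}(T)$ then verifies the two equations of \cref{def:strong2cov}, so $\operatorname{rk}_M$ is strongly $2$-coverage.

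For part \cref{2cov_part2}, set $f=\mathbf{1}_{\mathcal I(M)}$, fix $2\le d\le\operatorname{rk}(M)$, and verify the two conditions of \cref{def:2cov}. For indecomposability of $(f^{(d)})_\tau$ when $|\tau|\le d-2$: if $\tau\notin\mathcal I(M)$ the polynomial is identically zero (indecomposable by convention), while if $\tau\in\mathcal I(M)$ it is the generating polynomial of independent sets of size $d':=d-|\tau|\ge 2$ in $M/\tau$. A hypothetical decomposition $p_1+p_2$ on disjoint variable supports $V_1,V_2$ would yield disjoint $M/\tau$-independent sets $I_1\subseteq V_1$ and $I_2\subseteq V_2$ of size $d'$; applying the (symmetric) basis exchange axiom to any $i\in I_1$ produces $j\in I_2$ with $(I_1\setminus\{i\})\cup\{j\}$ independent of size $d'$, and since $d'\ge 2$ this set meets both $V_1$ and $V_2$, contradicting the decomposition. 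For the pair-level condition with $|\tau|=d-2$ (again WLOG $\tau\in\mathcal I(M)$), let $S$ be the set of non-loops of $M/\tau$, let $g$ be the coverage function from part \cref{2cov_part1} restricted to $S$, and set $\ell(\{i\})\equiv 1$ on $S$. Then $\ell$ is linear with $\ell(\{i\})=g(\{i\})=1$, and a short four-case check (the same four cases as above) verifies $f_\tau(T)=g(T)-\tfrac12\ell(T)$ for $T\subseteq S$ and $f_\tau(T)=0$ for $T\not\subseteq S$.

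The main conceptual obstacle is the indecomposability step, which is the only place genuine matroid theory (the basis exchange axiom) is invoked; the coverage-function constructions on pairs otherwise reduce to uniform case analyses once the correct $g$ and $\ell$ are guessed. Combined with \cref{main}, this in particular recovers the complete log-concavity of the bases generating polynomial of any matroid proved in \cite{AOV18}.
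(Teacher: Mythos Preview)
Your proposal is correct and follows essentially the same route as the paper: both parts construct the coverage function $g$ from the parallel-class partition of $M/\tau$ (loops getting $A_i=\emptyset$), use the contraction identity $\operatorname{rk}_M(\tau\cup T)=\operatorname{rk}_M(\tau)+\operatorname{rk}_{M/\tau}(T)$ for part \ref{2cov_part1}, and take $\ell\equiv 1$ on the non-loops for part \ref{2cov_part2}. The only difference is that the paper dispatches indecomposability in one line (``holds because of the exchange property of matroids''), whereas you spell out the basis-exchange contradiction explicitly; your version is more complete but not conceptually different.
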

 
A consequence of  \cref{prop:2coverage_functions} and \cref{prop:convex_cone} is that  matroid rank sum functions are strongly 2-coverage. Therefore, by \cref{fact:coverage_rank} and \cref{prop:EntropyisCoverage},  the joint entropy functions and coverage functions are strongly 2-coverage.

\par One of our main results is the following theorem. A proof of this theorem is included in \cref{sec:2-cov_CLS}. 
\begin{theorem}\label{main}
If  $f : 2^{[n]} \rightarrow \mathbb{R}_{\geq 0} $ is 2-coverage with respect to some $2 \leq d \leq \deg(f)$,  then $f^{(d)}$ is completely log-concave. 
\end{theorem}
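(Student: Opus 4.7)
The plan is to invoke \cref{log-concave-thm} for the $d$-homogeneous multiaffine polynomial $p_{f^{(d)}}$. Because $p_{f^{(d)}}$ is multiaffine, only partial derivatives indexed by subsets $\tau \subseteq [n]$ arise, and condition (i) of \cref{log-concave-thm}---indecomposability of $(f^{(d)})_\tau$ for $|\tau| \leq d-2$---is exactly the first clause of \cref{def:2cov}. So the real work is condition (ii): for each $\tau$ with $|\tau|=d-2$, show that the $2$-homogeneous multiaffine polynomial $q := (p_{f^{(d)}})_\tau$ is log-concave.

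Since $q$ is $2$-homogeneous, \cref{log-concave-lemma} reduces log-concavity to showing that its (constant) Hessian $H$---with $H_{ij} = f_\tau(\{i,j\})$ for $i \neq j$ and $H_{ii}=0$---has at most one positive eigenvalue. The definition \cref{def:2cov}(ii) supplies a coverage function $g$ and a linear function $\ell$ on some $S \subseteq [n]\setminus\tau$ such that $f_\tau(\{i,j\}) = g(\{i,j\}) - \tfrac12 \ell(\{i,j\})$ for $\{i,j\}\subseteq S$ and $0$ otherwise; in particular the rows and columns of $H$ outside $S$ are zero, so it suffices to analyze the $S \times S$ block. Using the coverage representation $g(T) = \sum_{U : U \cap T \neq \emptyset} x_U$ with $x_U \geq 0$ from \cref{prop:coverage_condition}, and writing $v_U := \mathbf{1}_U \in \R^S$ and $D_U := \mathrm{diag}(v_U)$, a direct computation (via $(JD_U + D_U J)_{ij} = v_U[i] + v_U[j]$ and $(D_U J D_U)_{ij} = v_U[i]\,v_U[j]$) yields the identity
\[
H \;=\; JF + FJ \;-\; W \;-\; (D - \Lambda),
\]
where $D := \sum_U x_U D_U = \mathrm{diag}(g(\{i\}))$, $\Lambda := \mathrm{diag}(\ell(\{i\}))$, $F := D - \tfrac12 \Lambda$, and $W := \sum_U x_U\, v_U v_U^\top$.

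The singleton bound $\ell(\{i\}) \leq g(\{i\})$ ensures both $F \succeq 0$ (diagonal with entries $g(\{i\}) - \tfrac12 \ell(\{i\}) \geq \tfrac12 g(\{i\}) \geq 0$) and $D - \Lambda \succeq 0$, while $W \succeq 0$ is a sum of PSD rank-ones. By \cref{lem:dj}, $JF + FJ$ has at most one positive eigenvalue; subtracting the PSD matrix $W + (D-\Lambda)$ preserves this property by Weyl's monotonicity ($B \succeq 0$ implies $\lambda_k(A - B) \leq \lambda_k(A)$ for every $k$). Hence $H$ has at most one positive eigenvalue, $q$ is log-concave, and \cref{log-concave-thm} yields the complete log-concavity of $p_{f^{(d)}}$. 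The main obstacle is arriving at this grouping: naively subtracting $\Lambda$ does not fit into a $JD + DJ$-type matrix, and the trick is to split $\Lambda$ into halves---absorbing one half into $F = D - \tfrac12 \Lambda$ (which stays PSD precisely because of the singleton bound $\ell(\{i\}) \leq g(\{i\})$) and leaving the other half inside the PSD diagonal $D - \Lambda$---so that \cref{lem:dj} becomes applicable to $JF + FJ$.
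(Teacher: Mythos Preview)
Your proof is correct and follows essentially the same route as the paper: both reduce via \cref{log-concave-thm} and \cref{log-concave-lemma} to showing the Hessian of the quadratic $q$ has at most one positive eigenvalue, and both establish this by writing $H = JF + FJ - (\text{PSD})$ with $F = \mathrm{diag}(g(\{i\}) - \tfrac12 \ell(\{i\})) \succeq 0$ and invoking \cref{lem:dj}. The only cosmetic difference is that the paper packages the coverage computation $(DJ+JD) - \nabla^2 p_{g^{(2)}} = W + D$ as a separate lemma (\cref{lem:mainPSD}) and then uses the Loewner ordering, whereas you inline that identity directly and cite Weyl monotonicity; the resulting inequality $H \preceq JF + FJ$ is identical.
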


One of the consequences of \cref{main} is that for any   $f : 2^{[n]} \rightarrow \mathbb{R}_{\geq 0}$ that is 2-coverage with respect to  some $2 \leq  d \leq \deg (f)$,  one can sample a set $S \subseteq [n]$ of size $d$ proportionate to $f(S)$ in polynomial time. 

\begin{corollary}
Given a  set function $f : 2^{[n]} \rightarrow \mathbb{R}_{\geq 0}$  that is 2-coverage with respect to some $2 \leq d \leq \deg(f)$, let $\mu_d$ be the distribution induced by $f^{(d)}$, i.e. $\mu_d(S) =  \frac{f(S)}{\sum_{S \subset [n], |S| = d} f(S)}$, for any set $S$ of size $d$. For any $\epsilon >0$, starting from an arbitrary set $S_0$,  the up-down walk $P$ on sets of size $d$ generates a sample from $\hat{\mu}_d$ such that $\|\hat{\mu}_d - \mu_d \|_{\TV} \leq \epsilon$ in time  $O(d \log (d/\epsilon))$, i.e., 
\[t_{\text{mix}}(P, S_0, \epsilon) \leq O(d \log (d/\epsilon)). \]
\end{corollary}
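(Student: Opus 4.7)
The plan is to derive this corollary as a direct consequence of Theorem~\ref{main} combined with the sampling result Theorem~\ref{thm:logconcave_sampling}, which was imported from \cite{ALOVV21}. Essentially no new technical work is needed: the entire substance has already been absorbed into the proof of Theorem~\ref{main}.

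First, I would invoke Theorem~\ref{main}: since $f$ is 2-coverage with respect to the given $d$, the $d$-homogeneous restriction $f^{(d)}$ is completely log-concave, and so in particular it is log-concave in the ordinary sense. By construction $f^{(d)}(S)=0$ whenever $|S|\neq d$, so $f^{(d)}$ is a $d$-homogeneous set function on $2^{[n]}$ in the sense used in \cref{thm:logconcave_sampling}.

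Next, I would observe that $\mu_d$ is just $f^{(d)}$ rescaled by the positive normalizing constant $Z=\sum_{|S|=d}f(S)$ (which is nonzero because $d\le \deg(f)$, so some set of size $d$ has $f(S)>0$). By \cref{prop:log-concave_closure}\eqref{item: scaling}, multiplying a log-concave polynomial by a nonnegative scalar preserves log-concavity, hence $\mu_d$ is itself a $d$-homogeneous log-concave set function. The support of $\mu_d$ coincides with $\{S\subseteq[n]:|S|=d,\,f(S)>0\}$, which is exactly the state space of the down-up walk from Section~2.3.

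Finally, I would apply \cref{thm:logconcave_sampling} to $\mu_d$, which yields the mixing time bound $t_{\mix}(P,S_0,\epsilon)\le O(d\log(d/\epsilon))$ for any starting state $S_0$ in the support. The one small bookkeeping point is the discrepancy between ``down-up'' (as stated in \cref{thm:logconcave_sampling}) and ``up-down'' (as named in the corollary); these two walks on $d$-uniform subsets are well known to have mixing times that agree up to constant factors via a standard one-step comparison, so the stated asymptotic bound transfers directly. There is no genuine obstacle here: the difficulty of the result is entirely contained in \cref{main}, and this corollary is essentially a restatement of the known connection between log-concavity and rapid mixing.
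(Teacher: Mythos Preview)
Your proposal is correct and matches the paper's approach: the corollary is stated without proof precisely because it is an immediate consequence of \cref{main} (giving complete log-concavity of $f^{(d)}$) combined with \cref{thm:logconcave_sampling}. Your additional remarks about the normalizing constant and the up-down versus down-up terminology are careful but not something the paper itself addresses.
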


The following theorem is our other main result. This theorem is proved in \cref{sec:strongly_2-cov_CLS}. 

\begin{theorem}\label{thm:inhomoglogconcavity}
Let $f: 2^{[n]} \rightarrow \mathbb{R_{\geq 0}}$ be  a strongly $2$-coverage set function. Then, the polynomial $ q_f (y, x_1, \dots, x_n) \coloneqq \sum_{i=0}^n y^ {n+1 -i} \sum_{S \subseteq [n], |S| = i} f(S) x^S$ is completely log-concave. 
\end{theorem}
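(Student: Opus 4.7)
The approach is to apply \cref{log-concave-thm} to $q_f$, which is homogeneous of degree $d = n+1$ in the $n+1$ variables $y, x_1, \dots, x_n$. I need to verify, for every $\alpha \in \Z_{\geq 0}^{n+1}$ with $\|\alpha\|_1 \leq n-1$, that $\partial^\alpha q_f$ is indecomposable, and, when $\|\alpha\|_1 = n - 1$, that the quadratic polynomial $\partial^\alpha q_f$ is log-concave. Since $q_f$ is multiaffine in the $x_i$'s I parametrize $\alpha = (k, \bone_T)$ with $k \in \Z_{\geq 0}$, $T \subseteq [n]$ and $k + |T| \leq n-1$; differentiating term by term yields
$$\partial^\alpha q_f \;=\; \sum_{i=|T|}^{n+1-k} \frac{(n+1-i)!}{(n+1-i-k)!}\, y^{n+1-i-k} \sum_{\substack{S \supseteq T \\ |S|=i}} f(S)\, x^{S \setminus T}.$$

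The heart of the proof is the quadratic case $k + |T| = n - 1$. Let $m = n - |T|$, $a = f(T)$, and let $g_T$ be the coverage function on $[n]\setminus T$ supplied by the strong 2-coverage hypothesis, so $f(T \cup R) = g_T(R) + a$ for $|R| \in \{1,2\}$. Writing $g_T(R) = \sum_B w_B \I{B \cap R \neq \emptyset}$ with $w_B \geq 0$, set $u_i = g_T(\{i\})$ and $w_{ij} = \sum_B w_B \I{i,j \in B}$. Up to a positive scalar the Hessian of $\partial^\alpha q_f$ is the $(m+1)\times(m+1)$ matrix
$$H \;=\; \begin{pmatrix} m(m+1)\, a & m(u + a\bone)^\top \\ m(u+a\bone) & M \end{pmatrix},\qquad M_{ij} = u_i + u_j + a - w_{ij}\ (i \neq j),\ \ M_{ii} = 0,$$
and by \cref{log-concave-lemma} it suffices to show $H$ has at most one positive eigenvalue. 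The key structural fact is that the Gram matrix $V := \sum_B w_B \bone_B \bone_B^\top$, whose entries are $V_{ii} = u_i$ and $V_{ij} = w_{ij}$ for $i\neq j$, is PSD because $g_T$ is coverage.

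Assuming first $a > 0$, I take the Schur complement of $H$ with respect to the $(y,y)$ entry: the inertia of $H$ equals $(1,0)$ plus the inertia of $S := M - \tfrac{m}{(m+1)a}(u + a\bone)(u + a\bone)^\top$, so it is enough to show $S \preceq 0$. Expanding everything in sight,
$$-S \;=\; V + \mathrm{diag}(u) + aI \;+\; \tfrac{m}{(m+1)a}\, uu^\top \;-\; \tfrac{1}{m+1}\bigl(u\bone^\top + \bone u^\top\bigr) \;-\; \tfrac{a}{m+1}\, J.$$
To certify $-S \succeq 0$ I evaluate its quadratic form on $x \in \R^m$. Setting $s = \bone^\top x$ and $t = u^\top x$ and peeling off the manifestly non-negative contributions $x^\top V x$ and $\sum u_i x_i^2$, the remainder equals
$$a\|x\|^2 + \tfrac{m}{(m+1)a}\, t^2 - \tfrac{2}{m+1}\, st - \tfrac{a}{m+1}\, s^2 \;=\; \tfrac{1}{(m+1)a}\Bigl(m\,(t - as/m)^2 + (m+1)\, a^2\,(\|x\|^2 - s^2/m)\Bigr),$$
which is non-negative by completing the square in $t$ and applying Cauchy--Schwarz $s^2 \leq m\|x\|^2$. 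Hence $-S \succeq 0$ when $a > 0$; the case $a = 0$ follows by continuity, since the set of symmetric matrices with at most one positive eigenvalue is closed.

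Indecomposability of $\partial^\alpha q_f$ for $\|\alpha\|_1 \leq n-1$ is comparatively mild. If $\partial^\alpha q_f \not\equiv 0$ then $f(S) > 0$ for some $S \supseteq T$; combined with monotonicity of $f$ (\cref{prop:strong_is_monotone_submodular}), the monomials $y^{n-|T|-k} x_i$ appear with positive coefficient for every $i$ with $f(T \cup \{i\}) = u_i + a > 0$, linking $y$ to those $x_i$'s, and any remaining $x_j$ with $u_j = a = 0$ is connected through $y^{n-1-|T|-k} x_i x_j$ with coefficient proportional to $f(T \cup \{i,j\}) = u_i$, positive for any $i$ with $u_i > 0$ (here one uses that if $u_j = 0$ then no coverage atom of $g_T$ contains $j$, so $w_{ij}=0$). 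The main obstacle in the whole argument is the explicit SOS-style identity in the third paragraph: the precise constants built into the strong 2-coverage hypothesis are what make $V \succeq 0$, the completed square in $t$, and the Cauchy--Schwarz slack combine to cancel every negative contribution to $-S$ exactly.
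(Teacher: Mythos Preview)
Your proof is correct, and the quadratic Hessian analysis takes a genuinely different route from the paper's. The paper does not take a Schur complement; instead, after a diagonal rescaling it writes the Hessian as $G = H' + K$, where $H'$ carries the coverage function $g$ and $K$ carries the constant $f(\tau)$, and then exhibits nonnegative diagonal matrices $D,D'$ with $G \preceq (D+D')J + J(D+D')$. The bound $H' \preceq D'J+JD'$ is exactly \cref{lem:mainPSD} (the same lemma that drives \cref{main}), and $K \preceq DJ+JD$ reduces to $J_m \preceq m I_m$; \cref{lem:dj} then gives at most one positive eigenvalue. Your Schur-complement-plus-SOS argument is more self-contained: it avoids \cref{lem:dj} and \cref{lem:mainPSD} entirely, at the price of having to discover the exact completion-of-squares identity. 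The paper's approach is more modular (it reuses the machinery already built for the $2$-coverage theorem), whereas yours isolates the role of the constant $a=f(\tau)$ cleanly and makes explicit where the Cauchy--Schwarz slack $\|x\|^2 - s^2/m \geq 0$ enters.

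One small point on indecomposability: to rule out the degenerate case $a=0$ and $u_i=0$ for all $i$ (in which your linking argument would have no $x_i$ to attach $x_j$ to) you need submodularity, not just monotonicity---monotonicity alone does not forbid $f_T(\{i\})=0$ for all $i$ while $f_T(S)>0$ for some larger $S$. \Cref{prop:strong_is_monotone_submodular} does give you both, and then the standard diminishing-returns step $f_T(S)-f_T(S\setminus\{j\})\leq f_T(\{j\})-f_T(\emptyset)=0$ forces $f_T\equiv 0$, contradicting $\partial^\alpha q_f\not\equiv 0$; you should make this explicit. The paper handles indecomposability slightly differently, by invoking \cref{prop:strong_to_weak} to get indecomposability of the top homogeneous slice $\partial^T p_{f^{(n+1-k)}}$ and then linking it to the $y$-part through a single shared variable.
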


The following corollary simply follows by an application of \cref{prop:ULC}.
\begin{corollary}
Let $f: 2^{[n]} \rightarrow \mathbb{R_{\geq 0}}$ be  a strongly $2$-coverage set function. Let $c_k$ be the $k$-th coefficient of  $ p_f (y, x) = \sum_{i=0}^n (\sum_{S \subseteq [n], |S| = i} f(S) )y^ {n+1 -i} x^i$. Then, for $1<k<n$, we get
\[
\left( \frac{c_k}{{n+1\choose k} }\right)^2 \geq  \left(\frac{c_{k-1}}{{n +1\choose k-1} }\right)\left(\frac{c_{k+1}}{{n +1\choose k+1} }\right)
\]
\end{corollary}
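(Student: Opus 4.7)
The plan is to reduce the statement to Proposition~\ref{prop:ULC} by first establishing that the bivariate polynomial $p_f(y,x)$ is completely log-concave. I would start from Theorem~\ref{thm:inhomoglogconcavity}, which asserts that the multivariate polynomial $q_f(y, x_1, \dots, x_n)$ is completely log-concave, and then observe that $p_f(y,x) = q_f(y, x, x, \dots, x)$, i.e.\ $p_f$ is the diagonal restriction of $q_f$ under the linear map $\phi : \R^2_{\geq 0} \to \R^{n+1}_{\geq 0}$ given by $\phi(y,x) = (y, x, x, \dots, x)$.

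The one step that needs care is checking that this diagonal substitution preserves complete log-concavity, since it is not one of the closure operations explicitly spelled out in Proposition~\ref{prop:log-concave_closure}. I would verify it directly from the definition. For any direction $a = (\alpha, \beta) \in \R^2_{\geq 0}$, the chain rule gives
\[ D_a (q_f \circ \phi) \;=\; (D_{\phi(a)} q_f) \circ \phi, \qquad \phi(a) = (\alpha, \beta, \beta, \dots, \beta) \in \R^{n+1}_{\geq 0}. \]
Iterating this identity, every iterated directional derivative of $p_f = q_f \circ \phi$ along non-negative vectors in $\R^2_{\geq 0}$ equals the $\phi$-pullback of an iterated directional derivative of $q_f$ along non-negative vectors in $\R^{n+1}_{\geq 0}$. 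Since $q_f$ is CLC, each such derivative of $q_f$ is non-negative and log-concave on $\R^{n+1}_{\geq 0}$; and because composition of a log-concave function with a linear map is log-concave, its $\phi$-pullback is non-negative and log-concave on $\R^2_{\geq 0}$. Hence $p_f$ is completely log-concave.

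Once $p_f(y,x) = \sum_{k=0}^{n} c_k\, y^{n+1-k} x^k$ is known to be CLC, Proposition~\ref{prop:ULC} with the identification $c_k = \sum_{|S|=k} f(S)$ immediately yields the stated inequality for every $1 < k < n$. All of the substantive work is packaged inside Theorem~\ref{thm:inhomoglogconcavity}; the rest is a one-line closure observation plus a direct appeal to the already-established $\mathrm{CLC} \Rightarrow \mathrm{ULC}$ implication, so I do not anticipate any genuine obstacle beyond cleanly writing out the chain-rule argument for diagonal substitutions.
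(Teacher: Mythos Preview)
Your proposal is correct and matches the paper's approach: the paper simply states that the corollary ``follows by an application of \cref{prop:ULC},'' leaving implicit the passage from $q_f$ to the bivariate $p_f$ via the diagonal substitution $x_1=\cdots=x_n=x$. You supply exactly that missing step with a clean chain-rule argument showing that complete log-concavity is preserved under the linear pullback $\phi(y,x)=(y,x,\dots,x)$, which is the right justification (and is in any case a standard closure property of Lorentzian/CLC polynomials under non-negative linear changes of variable).
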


\section{Basic Properties of 2-Coverage and Strongly 2-Coverage  Functions} \label{sec:prop_and_examples}

\begin{proof}[Proof of \cref{prop:strong_is_monotone_submodular}]
Fix  $A \subseteq [n]$. We show that for any $ B \supseteq A$, 
 $f(A) \leq f(B)$ and for all $i \in [n]$, 
 $f(A \cup \{i\}) - f(A) \geq  f(B \cup \{i\}) - f(B)$. We prove by induction on $|B \setminus A|$. The claims trivially hold for $|B \setminus A| = 0$. Now, let  $ |B \setminus A|  = \ell$ for some $1 \leq  \ell \leq n - |A|$. Take some $j \in B \setminus A$ and let $C \coloneqq B \setminus \{j\}$. By induction hypothesis,  $f(A) \leq f(C)$. Furthermore, by definition there a coverage function $g: 2^{[n] \setminus C} \rightarrow \mathbb{R}_{\geq 0}$ such that  $f(B) = f_C(\{j\}) = g(\{j\}) + f(C)$. Therefore, $f(A) \leq f(C) \leq f(B)$. To prove the submodularity condition, note that by induction hypothesis, we have $f(A \cup \{i\}) - f(A) \geq  f(C \cup \{i\}) - f(C)$ for all $i \in [n]$. Furthermore, by submodularity of $g$, we have
  $$  f(C \cup \{i\}) - f(C) = g(\{i\}) \geq g (\{j, i\}) - g(\{j\}) =  f( C \cup \{i,j\}) - f( C \cup \{j\})  = f(B \cup \{i\}) -  f(B).  $$
Therefore, $f(A \cup \{i\}) - f(A) \geq  f(B \cup \{i\}) - f(B)$ as desired. 
\end{proof}

\begin{proof}[Proof of \cref{prop:strong_to_weak}]
It is enough to show that given for any $2 \leq d \leq \deg (f)$, and any $\tau \subseteq [n]$ with $|\tau| \leq d-2$, $(f^{(d)})_\tau$ is indecomposable.  If $(f^{(d)})_\tau = 0$ this claim follows trivially.  Assume $(f^{(d)})_\tau \neq 0$ and let $p \coloneqq p_{f^{(d)}}$ be the generating polynomial of $f^{(d)}$.   For the sake of contradiction, assume that $p_\tau = h+g$ such that $h, g$ are both non-zero. Let $h$ and $g$ be respectively supported on   disjoint set of variables $S_h$ and $S_g$. By \cref{prop:strong_is_monotone_submodular}, we know that $f$ is monotone. Therefore, for any $x_i \in S_h$,  $x_j \in S_g$, we must have 
\[f(\tau \cup \{i\})  \leq f(\tau \cup \{i,j\}) =   f_\tau(\{i,j\}) =0. \]
This implies that $f_\tau ( \{i\}) = f(\tau \cup \{i\})  =0$. 
Similarly, $f_\tau(\{j\}) = 0$. Therefore, $f_\tau$ is $0$ on all sets of size 1.   By monotonicity of $f$, we get $f_\tau = 0$ which contradicts our assumption. 
\end{proof}

\begin{proof}[Proof of \cref{prop:convex_cone}]
Fix a $\tau \in [n]$ such that $0 \leq |\tau| \leq n-2$. Let $g_1, g_2: 2^{[n] \setminus \tau} \rightarrow \mathbb{R}_{\geq 0}$ be coverage functions such that $((f_1)_\tau)^{(2)}= (g_1)^{(2)} $ and $((f_2)_\tau)^{(2)}= (g_2)^{(2)}$. 
The statement follows from the fact that if  $g_1, g_2$ are coverage functions, then  $\alpha g$ and $g_1+g_2$ are also coverage functions. 
\end{proof}

\begin{proof}[Proof of \cref{prop:2coverage_functions}]

We prove \cref{2cov_part1}. Let $f \coloneqq \operatorname{  rk}_M$. For any $\tau \subseteq [n]$ with $|\tau| \leq n-2$, $f_\tau (S) = \operatorname{rk}_M(\tau) + \operatorname{rk}_{M/\tau}(S)$ for all  sets $S \subseteq [n] \setminus \tau$ of size 1 or 2. Note that $f(\tau) = \operatorname{rk}_M(\tau)$. To satisfy the condition of \cref{def:strong2cov}, it is enough to show that there is a coverage function $g$ that takes the same values as    $\operatorname{rk}_{M/\tau}(S)$ on sets of size $1$ and $2$. 
Using matroid partition property, $M/\tau$ can be partitioned into sets $S_0, \dots, S_k$ such that the following holds. 
\[
\operatorname{rk}_{M/\tau}(\{x, y\}) = 
     \begin{cases}
       \text{0} &\quad \text{if } x, y \in S_0\\
       \text{1} &\quad  \text{if } x\in S_0, y\in S_i \quad \text{for some } 1\leq i\leq k \\
       \text{1} &\quad \text{if } x, y \in S_i \quad \text{for some } 1\leq i\leq k\\
       \text{2} &\quad \text{if } x \in S_i, y \in S_j \quad \text{for some } 1\leq i< j \leq k\\ 
     \end{cases}
\]
Define $g$ as follows. 
For any $ i \in \{1, \dots, k\}$ and $x \in S_i$, let $A_x := \{ i\}$, and for any $x \in S_0$, let $A_x = \emptyset$.  For any $T \in [n] \setminus \tau$,  define  $g(T)  \coloneqq |\cup_{x\in T} A_x|$. One can easily check that  $(f_\tau)^{(1)}=g^{(1)} + f(\tau) $, and  $(f_\tau)^{(2)}=g^{(2)}  + f(\tau)$. 
\par
Now, we prove \cref{2cov_part2}. Let $M = ([n], I)$ be a matroid of rank $r$ and let
$f: 2^{[n]} \rightarrow \mathbb{R}$ be an indicator function of independent sets, i.e. $f(S) = \mathbf{1}_I (S)$. We want to show that $f$ is $2$-coverage. The indecomposability holds because of the exchange property of matroids. We verify the second condition. Fix $2 \leq d \leq r$ and $\tau \subseteq [n]$ with $|\tau| =  d-2$.  Let $S$ be the set of non-loop elements of $M/\tau$. Using the matroid partition property, $S$   can be partitioned into sets $S_1, \dots, S_k$  such that 
\[
(f_{\tau})^{(2)}(\{x, y\})  =
     \begin{cases}
      
       \text{0} &\quad x, y \in S_i \quad \text{for some } 1\leq i\leq k\\
       \text{1} &\quad x \in S_i, y \in S_j \quad \text{for some } 1\leq i< j \leq k\\ 
     \end{cases}
\]
 For any $ 1 \leq i \leq k$ and $x \in S_i$, let $A_x := \{ i\}$.   Define  the coverage function $g: 2^S \rightarrow [n]$ as  $g(T)  = |\cup_{x \in T} A_x|$ for every $T \subseteq S$. Furthermore, consider the linear set function $\ell: 2^S \rightarrow [n]$  given by $\ell(\{i\})  = 1$ for all $i\in S$. It is easy to check that $f_\tau (T)= g (T) -\frac{\ell}{2} (T)$, for any  $T \subseteq S$ of size $2$. 
\end{proof}

\section{Complete Log-Concavity of  Homogeneous Parts} \label{sec:2-cov_CLS}
In this section, we prove that if a set function $f$ is  $2$-coverage with respect to some $2 \leq d \leq \deg(f)$, then $f^{(d)}$ is log-concave. 
\begin{lemma} \label{lem:mainPSD}
Let $g: 2^{[n]} \rightarrow \mathbb{R}_{\geq 0}$ be a coverage function and  $D$ be a diagonal matrix with $D(i,i) = g(\{i\})$ for all $i\in [n]$.   Then, $R \coloneqq  (DJ+JD)  - \nabla^2 p_{g^{(2)}} \succeq  D$, where $p_{g^{(2)}}$ is the generating polynomial of $g^{(2)}$. 
\end{lemma}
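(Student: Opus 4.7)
The plan is to compute $R - D$ entrywise and recognize it as a non-negative combination of rank-one positive semidefinite matrices by invoking the coverage-function representation from \cref{prop:coverage_condition}.

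First, I would compute the entries of $R - D$ directly. Since $p_{g^{(2)}}(z) = \sum_{i<j} g(\{i,j\}) z_i z_j$, its Hessian $\nabla^2 p_{g^{(2)}}$ has $(i,j)$ entry equal to $g(\{i,j\})$ for $i \neq j$ and zero on the diagonal. Meanwhile $(DJ + JD)(i,j) = g(\{i\}) + g(\{j\})$ for $i \neq j$ and $(DJ+JD)(i,i) = 2 g(\{i\})$. Subtracting $D$ therefore gives
\[
(R-D)(i,i) = g(\{i\}), \qquad (R-D)(i,j) = g(\{i\}) + g(\{j\}) - g(\{i,j\}) \text{ for } i \neq j.
\]

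Next, I would invoke \cref{prop:coverage_condition} to pick non-negative reals $\{x_T\}_{T \subseteq [n]}$ with $g(S) = \sum_{T: T \cap S \neq \emptyset} x_T$ for every $S$. A one-line inclusion-exclusion then yields
\[
g(\{i\}) + g(\{j\}) - g(\{i,j\}) = \sum_{T \supseteq \{i,j\}} x_T \qquad (i \neq j),
\]
while $g(\{i\}) = \sum_{T \ni i} x_T$. Writing $\bone_T \in \{0,1\}^n$ for the indicator vector of $T$ and using $(\bone_T)_i^2 = (\bone_T)_i$, the diagonal and off-diagonal formulas above unify into the single expression
\[
(R-D)(i,j) = \sum_{T \subseteq [n]} x_T \, (\bone_T)_i \, (\bone_T)_j, \qquad \text{valid for all } i, j \in [n].
\]

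Finally, this identity exhibits $R - D = \sum_{T \subseteq [n]} x_T \, \bone_T \bone_T^\top$ as a non-negative combination of rank-one positive semidefinite matrices, so $R - D \succeq 0$, which is the claim. The only mildly subtle step is the unification of diagonal and off-diagonal entries into one coverage-type formula; once that is in hand the PSD conclusion is immediate, and I anticipate no genuine obstacle.
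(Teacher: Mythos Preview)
Your proposal is correct and follows essentially the same route as the paper: both invoke \cref{prop:coverage_condition}, compute the entries of $R$ (or $R-D$) to get $\sum_{T\ni i,j} x_T$ off the diagonal, and then recognize the resulting matrix as $\sum_T x_T\,\bone_T\bone_T^\top$ (the paper writes this as $\sum_T x_T B_T$ with $B_T(i,j)=\mathbf{1}[\{i,j\}\subseteq T]$, which is the same rank-one matrix). The only cosmetic difference is that you work with $R-D$ directly while the paper writes $R=\sum_T x_T B_T + D$; the content is identical.
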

\begin{proof}
By \cref{prop:coverage_condition},  there exists non-negative $\{x_T\}_{T\subseteq [m]}$   such that  for any $i\neq j\in [n]$
\begin{align*}
g(\{i, j\}) &=\sum_{T: T \cap \{i,j\} \neq \emptyset} x_{T} =\sum_{T: i\in T} x_{T} + \sum_{T: j\in T} x_{T}  - \sum_{T: i,j\in T } x_T \\&=  g(\{i\}) +g(\{j\})  -  \sum_{T: i,j\in T } x_T. 
\end{align*}
Note that for all $i\neq j\in [n]$,  $(DJ + JD) (i,j) =  g(\{i\}) +g(\{j\}) $. 
Therefore, we can write $R$  as
$$R(i,j) =\begin{cases}  2 g(\{i\}) & \text{if } i=j
\\  \sum_{T: i,j\in T} x_T  & \text{otherwise.}\end{cases}$$
Furthermore,  for any $T \subseteq [m]$ define matrix $B_T$  as  $B_T (i, j) = 1$ if $\{i,j\} \subseteq T$ and $0$ otherwise. We can rewrite $R$ as follows 
\[
R =  \sum_{T \subseteq[m]} x_T B_T  + D. 
\]
For any $T$, $B_T\succeq 0$. Therefore,  since $x_T \geq 0$,  we have $\sum_{T \subseteq[m]} x_T B_T\succeq 0$. This implies that $R \succeq  D$,  as desired. 
\end{proof}

\begin{proof}[Proof of \cref{main}]
Let $p \coloneqq p_{f^{(d)}}$ be the generating polynomial of $f^{(d)}$. We use  \cref{log-concave-thm} to prove the theorem. 
Note that the indecomposability condition holds by definition.  Now, we use  \cref{log-concave-lemma}  to prove that the second condition of    \cref{log-concave-thm} holds.  Fix  $ \tau =\{ i_1, \dots, i_{d-2}\}$. We want to show that $p_\tau$ is log-concave.  If $ p_\tau$ is identically zero, the condition trivially holds. So, assume that $ p_\tau \neq 0$.  By ‌\cref{log-concave-lemma}, it is enough to show that $\nabla^2 ( p_\tau)$ has at most one positive eigenvalue.  By definition, there is a set $S \subseteq [n] \setminus \tau$, a coverage function  $g: 2^{[S]} \rightarrow \mathbb{R}_{\geq 0}$, and a linear set function $\ell: 2^{S} \rightarrow \mathbb{R}_{\geq 0}$ such that  for any $T \subseteq [n]\setminus \tau$ of size $2$,  $ f_\tau (T) =   g(T) - \frac{\ell(T)}{2}$ if $T \subseteq S$, and  $f_\tau (T) = 0$ otherwise. Therefore, it is enough to show that $Q =\nabla^2 p_{g^{(2)}} -\nabla^2 p_{\frac{\ell^{(2)}}{2}} $ has at most one positive eigenvalue, where $Q$ is the principle minor  of $\nabla^2 ( p_\tau)$ obtained by restricting it to the rows and columns indexed by $S$. We can write, $Q=\nabla^2 p_{g^{(2)}} - (\frac{JC+CJ}{2}-C)$ where  $C$ is a diagonal matrix with $C (i,i) = \ell (\{i\})$.  Furthermore,  let $D$ be a diagonal matrix with $D(i,i) = g(\{i\})$ for all $i\in S$. Note that by  \cref{lem:mainPSD}, we can write $ p_{g^{(2)}} + D\preceq DJ + JD$. So 
\begin{align*}
\nabla^2  (p_\tau)  &=\nabla^2 p_{g^{(2)}} +C - \frac{JC+CJ}{2}  \preceq  \nabla^2   p_{g^{(2)}} + D - \frac{JC+CJ}{2}\\ &\preceq  DJ+ JD  - \frac{JC+CJ}{2} =  (D- \frac{C}{2})J+ J (D- \frac{C}{2}). 
\end{align*} 
Thus, by \cref{lem:dj}, $\nabla^2  (p_\tau)$ has at most one positive eigenvalue. 
\end{proof}

\section{Complete Log-Concavity of the Homogenization of the Generating Polynomial of Strongly 2-Coverage Functions} \label{sec:strongly_2-cov_CLS}

\begin{proof}[Proof of \cref{thm:inhomoglogconcavity}]
We use  \cref{log-concave-thm}. First, we show that  $q_f$ is indecomposable.  We want to show that for any   $0 \leq k \leq n-1$, $0 \leq \ell \leq n-1-k$ and any $\tau = \{i_1 \dots, i_\ell\}$,  $ \partial^\tau \partial ^{k}_{y} q_f (y, x)$ is indecomposable (assuming it is non-zero). Note that,  for some polynomial $g(y, x)$, we can write $\partial^\tau \partial ^{k}_{y} q_f (y, x) = y g(y, x) + \partial^\tau p_{f^{(n-k+1)}}(x)$, where  $p_{f^{(n-k+1)}}$ is the generating polynomial of $f^{(n-k+1)}$. Now, by \cref{prop:strong_to_weak}, $\partial^\tau p_{f^{(n-k+1)}}$ is indecomposable. Furthermore, $y g(y, x)$ is indecomposable since $y$ appears in all  of its monomials.  It is enough to show that there exists a variable that appears in monomials of both  $yg(y, x)$ and   $\partial^\tau p_{f^{(n-k+1)}}$. 
If $\partial^\tau p_{f^{(n-k+1)}}$ is identically zero, we are done.
Otherwise, since $f_\tau$ is monotone and submodular by \cref{prop:strong_is_monotone_submodular}, there  exists $j$ such that $f_\tau(\{j\}) >  0$. Therefore, $x_j y^{n-\ell-k}$ is a monomial in $ yg(y, x)$. By monotonicity of  $f_\tau$, for any set $S \subseteq [n] \setminus \tau$ of size $n-k- \ell+1$ such that $j \in S$, $x^S$ is a monomial in $\partial^\tau p_{f^{(n-k+1)}}$. This finishes the proof of indecomposability. Now, we prove that the second condition  of   \cref{log-concave-thm} holds. It is enough to show that  for any  $0 \leq k \leq n-1$ and  $\tau = \{i_1 \dots, i_k\}$, $ \partial^\tau \partial_y^{n - 1 - k} q_f (y, x)$ is log-concave. Let $p = \frac{\partial^\tau \partial_y^{n- 1 - k} q_f (y, x)}{(n- 1-k)!}$. We can write 
\[
p  = \frac{(n-k+1)(n-k)}{2} f(\tau)  y^{2}+(n-k) \sum_{i  \in[n] \setminus \tau } f (\tau \cup \{i\}) y x_{i}+ \sum_{  \{i, j\}  \subseteq [n] \setminus \tau } f (\tau \cup \{i, j\}) x_{i} x_{j}.
\]
Without loss of generality, assume that $p$ is supported on variables $\{x_1, \dots, x_m\} \cup \{y\}$ for  $m \coloneqq n -k$.
We compute the Hessian matrix. In order to write the Hessian as a matrix indexed by $\{0, \dots, m\}$, we assume that $y$ corresponds to $0$ and for all $1 \leq i\leq m$,  $x_i$ corresponds to integer $i$. With this indexing, we can write the Hessian matrix as 

\[ H = 
\begin{bmatrix}
(m+1)m f(\tau) & m f_\tau(  \{1\})  & \dots & m f_\tau(  \{m\})\\
m f_\tau(  \{1\}) & 0 & \dots  &  f_\tau(  \{1, m \}) \\
\vdots &\vdots & \dots & \vdots  \\
m f_\tau(  \{m\}) & f_\tau(   \{m, 1 \})  & \dots  &  0\\
\end{bmatrix}. 
\]
By \cref{log-concave-lemma}, it is enough to show that $H$ has at most one positive eigenvalue. By \cref{lem:normalize_1positive},  this is equivalent to showing that the following matrix has at most one positive eigenvalue. 
\[ G= 
\begin{bmatrix}
\frac{m+1}{m} f(\tau) & g(  \{1\})+f(\tau)  & \dots &  g(  \{m\}) + f(\tau)\\
 g(  \{1\})+ f(\tau) & 0 & \dots  &  g(  \{1, m \})+f(\tau)  \\
\vdots &\vdots & \dots & \vdots  \\
 g(  \{m\}) +f(\tau) & g(   \{m, 1 \})+f(\tau)   & \dots  &  0
\end{bmatrix} \]
\[ = \begin{bmatrix}
0 & g(  \{1\}) & \dots &  g(  \{m\}) \\
 g(  \{1\}) & 0 & \dots  &  g(  \{1, m \}) \\
\vdots &\vdots & \dots & \vdots  \\
 g(  \{m\}) & g(   \{m, 1 \})  & \dots  &  0\\
\end{bmatrix} 
+ \begin{bmatrix}
\frac{m+1}{m} f(\tau) & f(\tau)  & \dots &  f(\tau)\\
 f(\tau) & 0 & \dots  &f(\tau)  \\
\vdots &\vdots & \dots & \vdots  \\
 f(\tau) & f(\tau)   & \dots  &  0
\end{bmatrix}.
\]
where the coverage function $g$ satisfies $f(S)=f(\tau)+g(S\setminus \tau)$ for sets $S$ such that  $|\tau|+1\leq |S|\leq |\tau|+2$ and  $\tau \subseteq S$. In the above line, let  $H, K$ denote the first and second matrix respectively.  Let $D \in R^{(m+1) \times (m+1)}$ be a diagonal matrix such that $D(0,0) = \frac{m+1}{2m} f(\tau)$ and $D(i,i) = \frac{m-1}{2m} f(\tau)$ for $1 \leq i \leq m$. Let $D' \in R^{(m+1) \times (m+1)}$ be a diagonal matrix with $D'(0,0) =0$ and $D'(i,i) = g(\{i\})$ for $1 \leq i \leq m$. Note that  by \cref{lem:dj}, $(D +D')J + J (D+D')$ has exactly one positive eigenvalue.
Therefore, to show that $H + K$ has at most one positive eigenvalue, it is enough to prove that $(D +D')J + J (D+D')- H- K \succeq 0 $. Note that $D'J + J D'- H \succeq 0$. To see this, first note that the first row and column of $ D'J + J D'- H$ are zero. 
Furthermore, one can  check that after eliminating the first row and column of $K$, the remaining matrix is equal to the Hessian of the generating polynomial of $g^{(2)}$. Thus, the claim follows from \cref{lem:mainPSD}. Therefore, it is enough to show that $D J + J D- K \succeq 0$. Again, it is easy to check that the first row and column of $D J + J D- K$ are zero. After removing the first row and column of $D J + J D- K$, the remaining matrix is equal to $f(\tau) ( I_m - \frac{1 }{m} J_{m})$.
Now, note that  $ J_{m} \preceq m I_m$, as $m$ is the the largest eigenvalue of $J_m$. Therefore, $D J + J D- K \succeq 0$, as desired. 


\end{proof}

\section{Negative Results} \label{sec:negativeres}
We showed that for an expressive class of nonnegative monotone submodular functions, the generating polynomial of $d$-homogenous restriction of the function    is completely log-concave for any $d \geq 1$. We show that this claim does not hold for all nonnegative monotone submodular  functions. 

\begin{proposition}
There exist integers $n \geq d \geq 1$ and  a non-negative monotone submodular function $f: 2^{[n]} \rightarrow \mathbb{R}_{\geq 0}$  such that $f^{(d)}$is not  log-concave. 
\end{proposition}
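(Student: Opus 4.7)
The plan is to construct an explicit monotone submodular $f$ on a small ground set whose degree-$2$ homogeneous part $f^{(2)}$ is not log-concave. Take $n = 9$, partition $[9] = A \sqcup B$ with $|A| = 4$ and $|B| = 5$, and define
\[
f(S) = \begin{cases}
0 & |S| = 0, \\
1 & |S| = 1, \\
2 & |S| = 2 \text{ and } S \subseteq A \text{ or } S \subseteq B, \\
3/2 & |S| = 2,\ S \cap A \neq \emptyset,\ S \cap B \neq \emptyset, \\
2 & |S| \geq 3.
\end{cases}
\]

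First I would verify that $f$ is monotone and submodular. Monotonicity is immediate, since the listed values are non-decreasing along every growing chain. Submodularity reduces by the two-element exchange form of the diminishing returns property to checking $f(S \cup \{i,j\}) + f(S) \leq f(S \cup \{i\}) + f(S \cup \{j\})$ for every $S$ and $i \neq j \notin S$. This splits into a short case analysis on $|S| \in \{0,1,2\}$ and the same-/cross-part types of $S$ and $\{i, j\}$; every case comes out either tight or strictly slack, and for $|S| \geq 3$ all four terms equal $2$ so the inequality is tight.

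Next I would show that $p_{f^{(2)}}$ is not log-concave. Since this polynomial is $2$-homogeneous, by \cref{log-concave-lemma} it is log-concave if and only if its Hessian $M$ (with $M_{ii} = 0$ and $M_{ij} = f(\{i, j\})$) has at most one positive eigenvalue. The matrix $M$ has a clean block form: within-$A$ and within-$B$ blocks equal $2(J - I)$ and cross blocks equal $\tfrac32 J$. I would diagonalize via the orthogonal decomposition $\R^9 = V_1 \oplus V_2 \oplus V_3$, where $V_1 = \operatorname{span}(\mathbf{1}_A, \mathbf{1}_B)$ and $V_2, V_3$ are the orthogonal complements of $\mathbf{1}_A$ inside $\R^A$ and of $\mathbf{1}_B$ inside $\R^B$. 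On $V_2 \oplus V_3$ the cross blocks annihilate every vector while the within-blocks act as $-2 I$, so $M$ contributes the eigenvalue $-2$ with multiplicity $7$. In the orthonormal basis $(\mathbf{1}_A/2,\ \mathbf{1}_B/\sqrt{5})$ of $V_1$, $M$ restricts to
\[
\begin{pmatrix} 6 & 3\sqrt{5} \\ 3\sqrt{5} & 8 \end{pmatrix},
\]
whose trace $14$ and determinant $48 - 45 = 3$ are both strictly positive, giving two strictly positive eigenvalues. Hence $M$ has two positive eigenvalues and $p_{f^{(2)}}$ fails log-concavity.

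The one subtle point is the parameter choice. Writing $c_i = f(\{i\})$, monotone submodularity pins $f(\{i,j\})$ to the interval $[\max(c_i, c_j),\ c_i + c_j]$, and a further constraint from extending to triples forces the ratio (same-part pair value)/(cross-part pair value) to be at most $4/3$. Conversely, positive determinant on $V_1$ requires this ratio to strictly exceed $\sqrt{k_1 k_2 /((k_1 - 1)(k_2 - 1))}$; for $(k_1, k_2) = (4, 5)$ this threshold equals $\sqrt{5/3}$, which is just below $4/3$. Thus $n = 9$ with the above values lives right at the boundary of the admissible cone, and smaller $n$ would either push the ratio out of submodular feasibility or fail the eigenvalue inequality, so I expect no cleaner example of this form.
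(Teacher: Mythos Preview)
Your argument is correct. The construction is monotone submodular (your case analysis on $|S|\in\{0,1,2,\ge 3\}$ checks out; the binding constraints are indeed $S=\emptyset$ with $i,j$ same-part and $|S|=1$ with $i,j$ in the opposite part), and the spectral computation on the block Hessian is right: the $2\times 2$ restriction to $\operatorname{span}(\mathbf 1_A,\mathbf 1_B)$ has trace $14$ and determinant $3$, so $\nabla^2 p_{f^{(2)}}$ has two positive eigenvalues.

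Your route differs from the paper's. The paper takes $n=12$ and the budget-additive function $f(S)=\min\{\sum_{i\in S} w_i,\,2\}$ with weights $(1^6,2^4,0^2)$, then simply asserts that $\nabla^2 p_{f^{(2)}}$ has two positive eigenvalues. The trade-off: the paper's submodularity is free (budget-additive functions are a standard monotone submodular class, no case analysis needed), but the eigenvalue claim is left to the reader as a numerical check. Your two-block construction requires a short case check for submodularity but pays it back with a clean, fully explicit spectral decomposition and a smaller ground set. Your closing paragraph on the parameter threshold $\sqrt{k_1k_2/((k_1-1)(k_2-1))}$ versus the submodular ceiling $4/3$ is a nice explanation of why the example sits where it does, though it is commentary rather than part of the proof proper.
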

\begin{proof}
Let $[12]$ be the ground set. Furthermore, define $w_0 = \dots = w_5 = 1$, $w_6 = \dots = w_9 = 2$, and $w_{10} = w_{11} = 0$. Now, take the following set function $f: 2^{[12] }\rightarrow \mathbb{R}$ to be $f(S) =\min \{ \sum_{i \in S}  w_i, 2\}$. Note that this function is a budget additive function and is non-negative, monotone  and submodular. But, one can verify that the polynomial $p(x) = \sum_{\{i,j\} \subseteq [12] } f(\{i,j\}) x_i x_j$ is not log-concave as $\nabla^2 p(x)$ has two positive eigenvalues. 
\end{proof}
\begin{corollary}\label{cor:main-neg}
There exists monotone submodular function $f: 2^{[n]} \rightarrow \mathbb{R}_{\geq 0}$  such that $f$ is not  log-concave. 
\end{corollary}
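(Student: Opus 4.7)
The plan is to use the budget-additive function $f : 2^{[12]} \to \R_{\geq 0}$ constructed in the proof of the proposition, and to transfer the failure of log-concavity from the quadratic piece $p_{f^{(2)}}$ to $f$ itself. Before carrying out the argument I would briefly fix the meaning of ``$f$ is log-concave'': in the framework of this paper, and in particular in view of \cref{thm:inhomoglogconcavity} and the introductory remark that a non-log-concave $f$ forces a non-log-concave homogenization, the operative statement is that the homogenization $\Hom(p_f, y) = \sum_{i=0}^{n} y^{n+1-i} p_{f^{(i)}}(z)$ of the generating polynomial of $f$ is completely log-concave.

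Under that reading, the plan is to assume for contradiction that $\Hom(p_f, y)$ is CLC and then extract $p_{f^{(2)}}$ from it using only the two closure properties in \cref{prop:log-concave_closure}, namely closure under differentiation (part (iii)) and closure under specialization (part (i)). Specifically, I would apply $\partial_y^{n-1}$ to $\Hom(p_f, y)$. Because $f(\emptyset)=0$ kills the $i=0$ contribution and $\partial_y^{n-1} y^{n+1-i}$ vanishes for $i \geq 3$, only the $i=1$ and $i=2$ pieces survive, yielding
\[
\partial_y^{n-1} \Hom(p_f, y) \;=\; n!\, y\, p_{f^{(1)}}(z) \,+\, (n-1)!\, p_{f^{(2)}}(z),
\]
which is still log-concave by repeated application of part (iii). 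Specializing at $y=0$ via part (i) then preserves log-concavity, leaving $(n-1)!\, p_{f^{(2)}}(z)$ and hence $p_{f^{(2)}}$ itself log-concave, directly contradicting the proposition.

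The main obstacle is interpretive rather than technical: the corollary is only genuinely new information beyond the proposition once one fixes which precise property of $p_f$ the informal phrase ``$f$ is (not) log-concave'' is meant to encode. Once one commits to the CLC-of-homogenization reading above, nothing further is required beyond an algebraic unpacking of $\Hom(p_f, y)$ and two invocations of \cref{prop:log-concave_closure}; in particular, no additional combinatorial analysis of the budget-additive $f$ is needed beyond what the proposition already provided.
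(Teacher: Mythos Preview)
Your high-level strategy---take the budget-additive $f$ from the preceding proposition and argue that log-concavity of $f$ would force log-concavity of $f^{(2)}$---is exactly the paper's one-line proof. The gap is interpretive but has real content. In this paper ``$f$ is log-concave'' means that $p_f$ itself is log-concave (per the definition in the introduction), not that $\Hom(p_f,y)$ is CLC; the introductory sentence you cite actually \emph{distinguishes} the two notions, saying that failure of the former implies failure of the latter. Since CLC of $\Hom(p_f,y)$ implies log-concavity of $p_f$ (specialize $y=1$ via \cref{prop:log-concave_closure}), the negative statement you establish---that $\Hom(p_f,y)$ is not CLC---is strictly weaker than what the corollary asserts under the intended reading, namely that $p_f$ itself is not log-concave. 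Your $\partial_y^{n-1}$-then-specialize extraction is correct and clean for your reading but says nothing about $p_f$ directly.

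To close the gap you can argue without the homogenizing variable. If $p_f$ were log-concave, then at every $z\in\R^n_{>0}$ with $p_f(z)>0$ one has $p_f(z)\,\nabla^2 p_f(z)\preceq (\nabla p_f(z))(\nabla p_f(z))^\top$, and since the right-hand side has rank one, $\nabla^2 p_f(z)$ has at most one positive eigenvalue. Letting $z\to 0$ (along, say, $z=\epsilon\bone$, where $p_f>0$ for small $\epsilon$) and noting that $\nabla^2 p_f(0)=\nabla^2 p_{f^{(2)}}$ (both have $(i,j)$-entry $f(\{i,j\})$ and zero diagonal), continuity of eigenvalues forces $\nabla^2 p_{f^{(2)}}$ to have at most one positive eigenvalue, so $p_{f^{(2)}}$ is log-concave by \cref{log-concave-lemma}, contradicting the proposition. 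This is presumably the content behind the paper's unproved ``fact.''
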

\begin{proof}
This follows by the fact that if a  function $f$ is log-concave, all of its homogeneous parts are also log-concave. 
\end{proof}

Gelfand, Goresky, MacPherson, and Serganova proved that the support of any homogeneous multiaffine log-concave polynomial correspond to bases of a matroid \cite{GGMS87}. But there is not much known about the coefficient of these polynomials. A natural question to ask is that if the coefficients come from a   monotone submodular function that is non-negative on non-empty sets. Another natural question to consider is whether  the coefficients of these polynomials come from 2-coverage functions. The following proposition provides a counter-example to both of these  statements. 
\begin{proposition}
There exist integers $n \geq d \geq 1$ and a $d$-homogeneous multiaffine log-concave $p \in R_{\geq 0}[x_1, \dots, x_n]$ such that for any  $f: 2^{[n]}  \rightarrow \mathbb{R}_{\geq 0}$, if  the generating polynomial of $f^{(d)}$ is equal to $p$, $f$ is neither a monotone submodular function that is non-negative on non-empty sets nor a 2-coverage function. 
\end{proposition}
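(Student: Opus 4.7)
The plan is to exhibit an explicit small counterexample. Take $n = 3$, $d = 2$, and define
\[ p(x_1, x_2, x_3) \coloneqq 3\, x_1 x_2 + x_1 x_3 + x_2 x_3. \]
To verify that $p$ qualifies, I would note it is $2$-homogeneous, multiaffine, and has non-negative coefficients, then apply \cref{log-concave-lemma}: the Hessian of $p$ has characteristic polynomial $(\lambda+3)(\lambda^2 - 3\lambda - 2)$, whose roots $-3$, $(3-\sqrt{17})/2$, and $(3+\sqrt{17})/2$ include exactly one positive eigenvalue, so $p$ is log-concave. Its support $\{\{1,2\},\{1,3\},\{2,3\}\}$ is the set of bases of $U_{2,3}$, so the matroid-polytope constraint from \cite{GGMS87} is satisfied.

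Next, I would rule out any monotone submodular $f : 2^{[3]} \to \R_{\geq 0}$ non-negative on non-empty sets with $f(\{1,2\}) = 3$ and $f(\{1,3\}) = f(\{2,3\}) = 1$. Monotonicity forces $f(\{1,2,3\}) \geq f(\{1,2\}) = 3$, and submodularity applied to $A = \{1,3\}$, $B = \{2,3\}$ gives
\[ 2 \;=\; f(\{1,3\}) + f(\{2,3\}) \;\geq\; f(\{1,2,3\}) + f(\{3\}) \;\geq\; 3 + f(\{3\}), \]
forcing $f(\{3\}) \leq -1$ and contradicting non-negativity.

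The main obstacle is the $2$-coverage direction, since the auxiliary linear function $\ell$ and the many coverage weights provide significant slack. To handle it, note that at $\tau = \emptyset$ one needs a set $S \subseteq [3]$, a coverage $g : 2^S \to \R_{\geq 0}$, and a linear $\ell : 2^S \to \R_{\geq 0}$ with $\ell(\{i\}) \leq g(\{i\})$, so that the pair-coefficients of $p$ equal $g(T) - \ell(T)/2$ for $T \subseteq S$ and $0$ otherwise. Since all three pair values are positive, $S = [3]$ is forced. Using \cref{prop:coverage_condition}, write $g(T) = \sum_{T' : T' \cap T \neq \emptyset} x_{T'}$ for non-negative $\{x_{T'}\}_{\emptyset \neq T' \subseteq [3]}$ and put $X \coloneqq \sum_{T'} x_{T'}$; then $g(\{i,j\}) = X - x_k$ for $\{k\} = [3] \setminus \{i,j\}$. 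Abbreviating $\ell_i \coloneqq \ell(\{i\})$, the equation $X - x_3 = 3 + (\ell_1 + \ell_2)/2$ combined with $x_3 \geq 0$ yields $X \geq 3 + (\ell_1 + \ell_2)/2$. On the other hand, bounding $g(\{3\}) = x_3 + x_{13} + x_{23} + x_{123} \leq X - x_1 - x_2$, substituting the analogous pair equations for $x_1$ and $x_2$, and using $\ell_3 \leq g(\{3\})$ collapses (independently of $\ell_3$) to $X \leq 2 + (\ell_1 + \ell_2)/2$. The two bounds are incompatible, so no such $g, \ell$ can exist, and no extending $f$ is $2$-coverage.
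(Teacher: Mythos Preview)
Your proof is correct and uses the same counterexample $p=3x_1x_2+x_1x_3+x_2x_3$ as the paper, with the same monotone-submodular contradiction (the paper phrases it via diminishing returns rather than the lattice inequality, but the content is identical). For the $2$-coverage part your execution differs slightly: the paper argues abstractly that the required $g$ would violate submodularity (comparing $g(\{1,2,3\})-g(\{1,3\})$ to $g(\{2,3\})-g(\{3\})$ and invoking $\ell_3\le g(\{3\})$), whereas you unpack the coverage weights $x_T$ from \cref{prop:coverage_condition} and derive incompatible bounds on the total mass $X$; both hinge on the same constraint $\ell_3\le g(\{3\})$ and are really the same obstruction read two ways.
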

\begin{proof}
Let $p (x_1, x_2, x_3) := 3 x_1x_2 +   x_1x_3 + x_2x_3$. We have 
\[
\nabla^2 p = \begin{bmatrix}
0 & 3 & 1\\
3 & 0 & 1\\
1 & 1 & 0\\
\end{bmatrix}.
\]
One can easily check that $\nabla^2 p$ has exactly 1 positive eigenvalue. So, using \cref{log-concave-lemma}, $p$ is log-concave. Take an arbitrary $f: 2^{[3]}  \rightarrow \mathbb{R}_{\geq 0}$ such that the generating polynomial of $f^{(2)}$ is equal to $p$. To show the first part of the statement, we assume that $f$ is a   monotone  function that is non-negative on non-empty sets, and show that $f$ is not submodular.   By monotonicity, we have $f(\{1, 2, 3\})  \geq  f(\{1, 2\})$. Therefore
$$f(\{1, 2, 3\}) -  f(\{1, 3\}) \geq f(\{1, 2\}) -  f(\{1, 3\}) = 2.$$ Moreover,  $f(\{2, 3\})  -f(\{3\})  \leq  f(\{2, 3\}) \leq 1$. Therefore,  $f$ is not submodular. 

To show the second part of the statement, assume for contradiction that $f$ is a 2-coverage function.
 Since $f$ is non-zero on all sets of size 2, we must have $S=\{1,2,3\}$ (for $\tau=\emptyset$). 
So, there exists a coverage function $g: 2^{[3]} \rightarrow \mathbb{R}_{\geq 0}$ and a linear set  function $\ell: 2^{[3]} \rightarrow \mathbb{R}_{\geq 0}$ such that $f^{(2)} = (g-\frac{\ell}{2})^{(2)}$ and that $\ell(\{i\})\leq g(\{i\})$ for $i\in \{1,2,3\}$.
We show that $g$ is not submodular, so it cannot be a coverage function. Therefore
$$g(\{1, 2\}) -  g(\{1, 3\})  = f(\{1, 2\}) -  f(\{1, 3\}) + \frac{\ell(\{2\})  -\ell(\{3\})}{2}  = 2 + \frac{\ell(\{2\})  -\ell(\{3\})}{2}  .$$
 This implies that $g(\{1, 2, 3\}) -  g(\{1, 3\}) \geq 2 + \frac{\ell(\{2\})  -\ell(\{3\})}{2} $.  Moreover, 
\begin{align*}
g(\{2, 3\}) - g(\{3\}) &=  f (\{2, 3\}) + \frac{\ell (\{2\}) + \ell (\{3\})}{2} -  g(\{3\}) \\&= 1 - g(\{3\}) + \frac{\ell (\{2\}) + \ell (\{3\})}{2}  \underset{\ell(\{3\}\leq g(\{3\})}{\leq} 1+ \frac{\ell (\{2\}) - \ell (\{3\})}{2}. 
\end{align*}
Combining these, we get $g(\{1, 2, 3\}) -  g(\{1, 3\})  \geq g(\{2, 3\}) - g(\{3\})$, which is a contradiction with submodularity of $g$. 
\end{proof}

\printbibliography
\appendix 
\section{Entropy as a Coverage Function}
\begin{proof}[Proof of \cref{prop:EntropyisCoverage}]\label{proof:entropy-coverage}
We use \cref{prop:coverage_condition}.
 For any $T \subseteq [n]$,  define $x_T := I(Y_T|Y_{\bar{T}})$, where $Y_{\bar{T}} = [n] \setminus T$ and $I$ is the multivariate mutual information. We prove inductively that $H(Y_S |Z ) = \sum_{T: T\cap S \neq \emptyset} I(Y_T| Y_{\bar{T}}, Z)$ for an arbitrary set of random variables $Z$. This would  imply that for any  $S \subseteq [n]$, $f(S) =H(Y_S) =  \sum_{T: T\cap S \neq \emptyset} x_T$, which would finish the proof.  When $n =1$, the statement trivially holds. Assuming the statement is true for $n = k-1$, we prove it for for $n=k$. First, let $S=\{i\}$. We have
 \[
\sum_{T: T\cap S \neq \emptyset} I(Y_T \mid Y_{\bar{T}}, Z)= \sum_{T: i \in T} I(Y_T \mid Y_{\bar{T}}, Z) = (\sum_{T: i \in T, T \neq \{i\}} I(Y_{T \setminus \{i\}} | Y_{\bar{T}}, Z)  -  I(Y_{T \setminus \{i\}} | Y_{\bar{T} \cup \{i\}}) , Z)+ H(Y_i | Y_{\bar{i}}, Z),
\]
where we used the  fact that for any set of random variables $X_1, \dots, X_k$, $k\geq 2$, and any set of random variables $Z$, 
\[
I(X_1, \dots, X_k|Z) :=  I(X_1, \dots, X_{k-1}|Z) - I(X_1, \dots, X_{k-1}| X_k, Z).
\]
Using induction hypothesis
\begin{align*}
\sum_{T: i \in T, T \neq \{i\}} I(Y_{T \setminus \{i\}} | Y_{\bar{T}}, Z)  -  I(Y_{T \setminus \{i\}} | Y_{\bar{T} \cup \{i\}}, Z) &= \sum_{T\subseteq [n] \setminus \{i\}: T \cap ([n]\setminus \{i\})\neq \emptyset } I(Y_{T} | Y_{[n]\setminus(T \cup \{i\})}, Z)  -  I(Y_{T} | Y_{[n]\setminus T}| Z)  \\& =  H(Y_{[n] \setminus \{i \}}| Z) -  H(Y_{[n] \setminus\{ i\} }| Y_{i}, Z)\\ & = H(Y_{[n] \setminus \{i\}}| Z) - H(Y_{[n]}|Z) + H(Y_i|Z)  \\ & = -H(Y_i | Y_{\bar{i}}, Z) +H(Y_i |Z) 
\end{align*}
Therefore, 
\begin{align*}
\sum_{T: T\cap S \neq \emptyset} I(Y_T \mid Y_{\bar{T}}, Z) = H(Y_i|Z). 
\end{align*}

Now, assume that this equation holds for any  $S$ such that $|S| <l$. We want to show that it holds for $|S| = l$.  Choose $i \in S$.  We have
\[
\sum_{T: T\cap S \neq \emptyset} I(Y_T \mid Y_{\bar{T}}, Z) =\sum_{T: T\cap ( S \setminus \{i\}) \neq \emptyset} I(Y_T | Y_{\bar{T}}, Z) + \sum_{T \cap S = \{i\}}  I(Y_T | Y_{\bar{T}}, Z).
\]
Note that $\sum_{T: T\cap ( S \setminus \{i\}) \neq \emptyset} I(Y_T | Y_{\bar{T}}, Z) = H(Y_{S \setminus \{i\}}|Z) $ by  the second induction hypothesis. Furthermore,
\[
 \sum_{T \cap S = \{i\}}  I(Y_T | Y_{\bar{T}}, Z) =  (\sum_{T \cap S = \{i\}, T \neq \{i\}}   I(Y_{T \setminus \{i\}} | Y_{\bar{T}}, Z)  -  I(Y_{T \setminus \{i\}} | Y_{\bar{T} \cup \{i\}}, Z) )+ H(Y_i | Y_{\bar{i}}).
\]
Similar to what we did before, using the first induction hypothesis we get
\begin{align*}
\sum_{T \cap S = \{i\}, T \neq \{i\}}   I(Y_{T \setminus \{i\}} | Y_{\bar{T}}, Z)  -  I(Y_{T \setminus \{i\}} | Y_{\bar{T} \cup \{i\}}, Z) &= 
H(Y_{[n] \setminus S } \mid Y_{S \setminus \{i\}},  Z) -  H(Y_{[n] \setminus S }| Y_{S}, Z) \\
&=  H(Y_{[n] \setminus S } \mid Y_{S \setminus \{i\}}, Z) \\&-  H(Y_{([n] \setminus S) \cup \{i\}} \mid Y_{S \setminus \{i\}}, Z) + H( Y_{ i} \mid Y_{S \setminus \{i\}}, Z)
\\
&= -H(Y_i | Y_{\bar{i}}, Z)+H( Y_{ i} \mid Y_{S \setminus \{i\}}, Z).
\end{align*}
Therefore, 
\[
\sum_{T \cap S = \{i\}}  I(Y_T | Y_{\bar{T}}, Z)  = H( Y_{ i} \mid Y_{S \setminus \{i\}}, Z).
\]
So 
\[
\sum_{T: T\cap S \neq \emptyset} I(Y_T \mid Y_{\bar{T}}, Z) =H( Y_{ i} \mid Y_{S \setminus \{i\}}, Z)+  H(Y_{S \setminus \{i\}}, Z)  = H(Y_S|Z), 
\]
as desired. 
\end{proof}

\end{document}